\newcommand\indicator{{\mathbb I}}
\newcommand\vecm{{\bf{m}}}
\newcommand\vecs{{\bf{s}}}
\newcommand\inTwo{\buildrel {{\cal L}_2} \over  \longrightarrow}
\newcommand\bfsigma{{\mathbf\Sigma}}
\newcommand\bfo{{\bf o}}
\newcommand\bfO{{\bf O}}
\newcommand\veca{{\bf a}}
\newcommand\vecb{{\bf b}}
\newcommand\vece{{\bf e}}
\newcommand\vecQ{{\bf Q}}
\newcommand\vecv{{\bf v}}
\newcommand\vecX{{\bf X}}
\newcommand\matA{{\bf A}}
\newcommand\matB{{\bf B}}
\newcommand\matC{{\bf C}}
\newcommand\matI{{\bf I}}
\newcommand{\iu}{\mathrm{i}\mkern1mu} 
\newcommand\matM{{\bf M}}
\newcommand{\inL}[1]{\buildrel \mathcal{L}_{#1} \over\longrightarrow}
\newcommand{\field}{{\mathbb F}}
\newcommand{\normal}{{\mathcal{N}}}
\newtheorem{theorem}{Theorem}[section]
\newtheorem{lemma}{Lemma}[section]
\newtheorem{cor}{Corollary}[section]
\newtheorem{remark}{Remark}[section]
\newtheorem{example}{Example}[section]
\newcommand\polya{{P\' olya}}
\newcommand\convD{{\buildrel {\cal D} \over \longrightarrow}}
\newcommand\given{\, \vert \, } 
\newcommand\almostsure{\buildrel a.s. \over  \longrightarrow}
\newcommand\inprob{\buildrel {\mathbb P} \over  \longrightarrow}
\newcommand\Prob{{\mathbb P}}
\newcommand\E{{\mathbb E}}
\newcommand\V{{\mathbb V}{\rm ar}}
\newcommand\Cov{{\mathbb C}{\rm ov}}
\begin{document}
\begin{center}
{\bf \Huge On affine multi-color urns grown under multiple drawing}

\bigskip

Joshua Sparks\footnote{Department of Statistics,
The George Washington University, Washington, D.C. 20052, U.S.A. Email: josparks@gwu.edu}
\quad
Markus Kuba\footnote{Department of Applied Mathematics and Physics, 
FH Technikum Wien, Vienna, Austria.
Email: kuba@technikum-wien.at}
\quad
Srinivasan Balaji\footnote{Department of Statistics,
The George Washington University, Washington, D.C. 20052, U.S.A. Email: balaji@gwu.edu}
\quad 
Hosam Mahmoud\footnote{Department of Statistics,
The George Washington University, Washington, D.C. 20052, U.S.A. Email: hosam@gwu.edu}
\quad 
\date{\today}
\end{center}
\section*{Abstract}
{Early investigation of \polya\ urns considered drawing balls one at a time.
In the last two decades, several authors considered multiple drawing in each step, but
mostly for schemes on two colors. In this manuscript, we consider multiple drawing
from urns of balls of multiple colors, formulating asymptotic theory for 
specific urn classes and addressing more applications.

The class we consider is affine and tenable, built around a ``core'' square matrix. An index 
for the drawing schema is derived from the eigenvalues of the core. We identify three
regimes: small\hbox{-,} critical\hbox{-,} and large-index. In the small-index regime, we find an asymptotic 
Gaussian law. In the critical-index regime, we also find an asymptotic Gaussian law, albeit  
a difference in the scale factor, which 
involves logarithmic terms. 

In both of these regimes, we have 
explicit forms for the structure of the mean and the covariance matrix of the composition vector (both exact and asymptotic). In all three regimes we have strong laws.

\bigskip\noindent
{\bf Keywords:} \polya\ urn, combinatorial probability, limiting distribution, 
multivariate central limit theorem, multivariate martingale.

\bigskip\noindent
{\bf MSC:} {60F05, 
                           60C05, 
                           60G46}  
\section{Introduction}
\polya\ urns are popular modeling tools, as they capture the dynamics of
many real-world applications involving change over time. The book~\cite{Mah2009} 
lists numerous applications in informatics and biosciences; see also \cite{Bala,JohnsonKotz} for a wider
perspective. 

In the early models,  single balls are drawn one at a time.
In the last two decades, several authors generalized the models to schemes
evolving on multiple drawing; see~\cite{Bose,Chen1,Chen2,Crimaldi,Panholzer,
Kuba,Lasmar,Mah2013}. 
However, most of these investigations deal only with urn schemes 
on two colors. So far, explicit expressions and asymptotic expansions for the expected value, the variance and covariances 
{and} higher moments have proven to be elusive in the 
{multi-color} case.

In this manuscript, we present a more comprehensive 
approach in which there are multiple colors in the urn, and we sample several balls 
in each drawing. Depending on the composition of the sample, action is taken to 
place balls in the urn.  We ought to mention that this is a new trend in investigating
multi-color urns; little exists in the literature beyond the study of urn
schemes on an infinite number of colors, where the authors imposed restrictions
on the number of colors to add after each drawing~\cite{Ban,Janson2019}.

As a modeling tool, multi-color multiple drawing urns have their niche in applications
to capture the dynamics in systems that internally have multiple interacting parts. 
For instance, the Maki-Thompson rumor spreading model~\cite{Maki} can be modeled by
an urn scheme on three colors (representing ignorants, spreaders and stiflers of the rumor) experiencing
pair interactions{, such as social visits and phone calls (represented as draws of 
pairs of balls at a time).   Another application is to produce a profile of
the nodes in random hypergraphs see~\cite{Sparks1}, Chapter 6, and its prelude~\cite{Sparks2}.
\section{The mechanics of a multi-color multiple drawing urn scheme}
We first describe a model of multi-color multiple drawing urn qualitatively. 
We consider an urn growing on $k$ colors, which we label 
 $1,\ldots, k$. 
Initially, the urn contains a positive number of balls of colors $i\in [k]$ (some of the colors maybe initially absent).\footnote{The notation $[k]$ stands for the set $\{1, 2, \ldots, k\}$.}
At each discrete time step, a sample of balls is drawn out of the urn (the sample is taken without
replacement).\footnote{We can develop a model based on sampling with replacement.
The proof techniques and results for such a sampling scheme are very similar
to those that appear in a sampling scheme without replacement. To avoid distracting 
the reader, we do not discuss schemes under sampling with replacement in great detail.
We devote Section~\ref{Sec:rep} to outline results under sampling
with replacement.  
We refer a reader interested in more details to~\cite{Sparks1}.} 
Note that we speak of the entire sample as one draw. After the sample is collected, 
we put it back in the urn. Depending on what appears in the sample,
we add balls of various colors. Such models are to be called multi-color 
multiple drawing schemes.

Many such schemes can emerge upon specifying a particular 
growth algorithm (how many balls to draw, what to add, etc.). 

A sample of size $s$ is a {\em partition} of the number $s$ into nonnegative integers. Each part of the partition corresponds to the number of balls 
{in the sample} from a particular color after drawing the sample.
In other words, we view a sample as a row vector~$\vecs$ with nonnegative components
$s_1, \ldots, s_k$. The components of $\vecs$ are the solutions of the Diophantine equation
$$s_1 + \cdots + s_k = s$$
in non-negative integers. There are $s +k-1\choose s$ such solutions. 

In a very general model, when the sample $\vecs$ is drawn, we add $m_{\vecs,j}$ balls of color $j$, for $j=1, \ldots, k$, where $m_{\vecs,j}$ can be a  random variable.
We only consider the case of fixed additions: $m_{\vecs, j}\in \mathbb Z$,
the class of urn schemes where 
the sample size (in each drawing) is a positive fixed number $s$ (not changing over time), 
and the total number of balls
added after each drawing is also a fixed number $b$. Such an urn is said to 
be {\em balanced}, and $b$ is called the {\em balance factor}.
Further specification will ensue while crafting out conditions for linear recurrence.
 
We canonically rank the various samples through an application of { Algorithm} L in Knuth~\cite{Knuth} {called \emph{reverse lexicographic order}}, see \cite{Konzem}. In this scenario, the sample $(x_1, \ldots, x_k)$ precedes the sample 
$(y_1, \ldots, y_k)$, if for some $r \in [k]$, we have $x_i = y_i$ for 
$i=1, \ldots, r-1$, and $x_r > y_r$. For example, drawing samples of size $s=3$ from an urn
on $k=3$ colors, in this modification of the canonical ranking, 
the sample $(3,0,0)$ precedes the sample $(2,1,0)$, which in turn precedes the sample $(2, 0, 1)$.

Our interest is in urns that can endure indefinite drawing. That is,
no matter which stochastic path is followed, we can draw ad infinitum and
are able to execute the addition 
{rules} after each drawing. 

We call such a scheme
{\em tenable}. For instance, if all the numbers $m_{\vecs, j}$, for all feasible 
$\vecs$ and $j=1, \ldots, k$, are positive, the scheme is tenable.  
\section{The $(k,s,b)$-urn}
Consider a 
$k$-color
multiple drawing tenable balanced urn scheme grown
by taking samples of a fixed size $s\ge 1$, with balanced factor $b \ge 1$.
We call such a class $(k,s,b)$-{\em urns}.  
Let $X_{n,i}$ be the number of balls in the urn of color $i\in [k]$ after $n\ge 0$ draws, and let $\vecX_n$ be the {\em row} vector with these components. We call $\vecX_n$ the {\em composition vector}.
Thus, initially, there are $X_{0,i}\ge 0$ balls of color $i$, for $i=1,\ldots, k$.
Since the scheme is tenable, we must enable a first draw by having 
$\sum_{i=1}^k X_{0,i}\ge s$. 
Let $\tau_n$ be the total number of balls after $n$ draws.\footnote{Tenability adds additional conditions on the structure, which we discuss later.} 
For a balanced urn, 
we have
$$\tau_n = \tau_{n-1} + b = bn + \tau_0.$$
To study $(k,s,b)$-urns, we first need
to set up some notation.
\subsection{Notation} 
\label{subsec:Notation3}
We use the notation $\indicator_{\mathcal E}$ for the indicator of event $\mathcal E$.
For integer $r\ge 0$ and  $z\in \mathbb C$, the falling factorial $z(z-1) \cdots(z-r+1)$ is
denoted by $(z)_r$, and $(z)_0$ is to be interpreted as 1.
For a row vector $\vecs = (s_1, \ldots, s_k)$, {with $s_i\ge 0$ (for $i\in [k]$) and} $\sum_{i=1}^k s_i = s$,
the multinomial coefficient $s \choose s_1, \ldots, s_2$ may be written as $s \choose \vecs$ , and the concatenation $(y_1)_{s_1}(y_2)_{s_2}\cdots(y_k)_{s_k}$ of falling factorials may be succinctly written
as $({\bf y})_\mathbf{s}$ for economy, where ${\bf y} = (y_1, \ldots, y_k)$.  
 
There is a multinomial theorem for falling factorials, which expands a falling factorial of a sum 
in terms of the falling factorials of the individual numbers in the summand. Namely,
for $\mathbf{z}=(z_1, \ldots, z_k)\in \mathbb{ C}^k$, the theorem states that
\begin{equation}
(z_1 +\cdots + z_k)_s = \sum_{\vecs} {s\choose \vecs} (\mathbf{z})_\mathbf{s}.
\label{Eq:falling}
\end{equation} 
{This expansion is 
known as the (multivariate) Chu-Vandermonde identity}; the reader can find it {in} classic books like~\cite{Bailey,Graham}.

For probabilistic convergence modes we use $\convD, \inprob, \almostsure$ to 
respectively denote convergence in distribution, 
convergence in probability, and
convergence almost surely. 

We denote a diagonal matrix with the provided entries $a_1, a_2, \dots, a_k,$ as 
\begin{align*}
\mathbf{diag}(a_1, a_2, \dots, a_k) =\small
\begin{pmatrix}
			a_1 & 0&\ldots&0 \cr
                               	0 &a_2&\ldots&0 \cr
			\vdots& \vdots&\ddots&\vdots \cr
                               	 0& 0&\ldots&a_k\cr
\end{pmatrix},
\label{eq:diag}
\end{align*}
with $\mathbf{diag}(\mathbf{v})$ representing the diagonal matrix with {diagonal} entries equal to the components of a vector $\mathbf{v}$. 

We employ the standard asymptotic notation $o$ and $O$. We also need them in a matricial sense, which will be indicated in boldface, so $\bfo(n)$ and $\bfO(n)$ indicate a matrix
of appropriate dimensions with all its entries being $o(n)$ or~$O(n)$.

In the ensuing text, unless otherwise stated, all vectors are  {row vectors} of~$k$ components and
all square matrices are $k\times k$. The identity matrix is denoted by $\matI$, 
the row vector of zeros is denoted by ${\bf 0}$ and the zero matrix is
denoted by~${\bf 0}$. Likewise, the vector of ones is denoted by ${\bf 1}$. 
 
We define the discrete simplex $\Delta_{k,s}$ to be the collection of all possible samples~$\mathbf{s}$, or more precisely
\begin{align*}
\Delta_{k, s} = \Big\{ \vecs = (s_1, \ldots, s_k)\, \Big|\,   s_i\ge 0 , \ \ i\in[k];
         \ \  \sum_{j=1}^k s_j = s \Big\}.
\end{align*}
We arrange the ball addition numbers  in a ${s+k-1\choose s} \times k$ {\em replacement} 
matrix $\matM = [m_{\vecs,j}]_{\vecs\in \Delta_{k,s},j\in [k]}$, 
where the entry $m_{\vecs,j}$ is the number of balls of color $j$ that we add to the urn 
upon drawing the sample $\vecs$. We 
take the entire row corresponding
to $\vecs$ as a row vector denoted by $\vecm_\vecs =(m_{\vecs, 1}, 
m_{\vecs,2}, \dots ,a_{\vecs,k})$. 

When we deal with the eigenvalues $\lambda_1, \ldots, \lambda_k$ of a $k\times k$ matrix, we arrange them in decreasing order of their real parts. In other
words, we consider them indexed in the fashion
$$\Re \, \lambda_1 \ge \Re \, \lambda_2\ge \cdots\ge  \Re\, \lambda_k.$$
In the case $\lambda_1$ is of multiplicity 1, we call it the {\em principal eigenvalue}.
The associated eigenvector $\vecv_1$ is dubbed  {\em principal {left} eigenvector}.

The replacement matrix $\matM = [\vecm_\vecs]_{\vecs\in \Delta_{k,s}}$ is obtained by stacking up the row vectors $\vecm_\vecs$, for $\vecs\in\Delta_{k,s}$ from canonically smallest to largest (that is the smallest canonical sample labels the bottom row of $\matM$ 
and the largest canonical sample labels the top row of $\matM$).
\begin{example}
\label{Ex:MMatrix3}
\end{example}
Consider a $(3,3,9)$-urn scheme. The replacement matrix shown below is $10\times 3$, filled 
with some numbers consistent with the chosen parameters--- the sum across each row is the balance factor $b=9$.
The rows are labeled to the left with the partitions of $s=3$ into $k=3$ components listed 
(from top bottom) in decreasing canonical order.
For example, for $\vecs =(2, 0,1)$, the entry $m_{\vecs, 3}$ is 5.
 \begin{align*}
 \matM = \begin{matrix}
 300 \cr
                               	210 \cr
			201 \cr
                               	120 \cr
			111 \cr
                               	102 \cr
			030 \cr
			021 \cr
                               	012 \cr
			003 
\end{matrix}
\begin{pmatrix}3 & 3&3 \cr
                               	4 &2&3 \cr
			2 & 2&5 \cr
                               	5 & 1&3 \cr
			3 & 1&5 \cr
                               	1 & 1&7 \cr
			6 & 0&3 \cr
			4 & 0&5 \cr
                               	2 & 0&7 \cr
			0 & 0&9 
\end{pmatrix}{.}
\end{align*}
\subsection{Tools from linear algebra}
For a square matrix $\matA$,
we let $\sigma(\matA)$ be the spectrum of $\matA$ (its set of eigenvalues). We use the Jordan decomposition of $\matA$ to produce a direct sum $\bigoplus_\lambda \mathbf{E}_\lambda$ of general eigenspaces $\mathbf{E}_\lambda$ that decomposes $\mathbb{C}^k$, where $\matA-\lambda \matI$ is a nilpotent operator on $\mathbf{E}_\lambda$ for all $\lambda \in \sigma(\matA)$.

Using \cite{Nomizu} (Theorem 7.6), this result implies that for each $\lambda \in \sigma(\matA)$, there exist{s} a projection matrix $\mathbf{P}_\lambda$ that commutes with $\matA$ and satisfies the conditions
\begin{align*}
\sum_{\lambda \in \sigma(\matA)} \mathbf{P}_\lambda = \matI,
\end{align*}
\begin{align*}
\matA\mathbf{P}_\lambda = \mathbf{P}_\lambda \matA = \lambda \mathbf{P}_\lambda + \mathbf{N}_\lambda, 
\end{align*}
where $\mathbf{N}_\lambda = \mathbf{P}_\lambda \mathbf{N}_\lambda=\mathbf{N}_\lambda \mathbf{P}_\lambda$ is nilpotent. Furthermore, these projection matrices satisfy $\mathbf{P}_{\lambda_i} \mathbf{P}_{\lambda_j} = \mathbf{0}$, 
when $\lambda_i \ne \lambda_j$. We define $\nu_\lambda \ge 0$ to be the integer such that $\mathbf{N}_\lambda^{\nu_\lambda} \ne \mathbf{0}_{k \times k}$ but $\mathbf{N}_\lambda^{\nu_\lambda+1} = \mathbf{0}$. 

\begin{remark}
In the Jordan normal form of $\matA$, the largest Jordan block with~$\lambda$ has size $(\nu_\lambda+1)$ and thus $(\nu_\lambda+1)$ is at most equal to the multiplicity of $\lambda$. If the algebraic multiplicity of $\lambda$ is ${\cal M}_\lambda$, we can determine the size of Jordan blocks for~$\lambda$ by analyzing the dimension of the nullity of $(\matA-\lambda \matI)^i$ for $i=1, \dots,{\cal M}_\lambda$. If $\nu_\lambda=0$, for all $\lambda \in \sigma(\matA)$, then $\matA$ is diagonalizable.
\end{remark}
\subsection{The affine subclass}
Suppose $\field_n$ is the sigma field generated by the first $n$ {samples}. 
For a general scheme, the conditional expectation
$\E[\vecX_n \given \field_{n-1}]$ may contain combinations of $\vecX_{n-1}$ corresponding to higher moments of the counts in $\vecX_{n-1}$, such as $\E[\vecX_{n-1}^T \vecX_{n-1}]$,
of all mixed second moments. 
   
We call an urn scheme {\em affine}, if for each $n\ge 0$,
the conditional expectation of $\vecX_n$
takes the form
$$\E[\vecX_n\given \field_{n-1}] =  {\vecX_{n-1}  {\bf C}_n},$$
for some real-valued $k\times k$ matrix ${\bf C}_n$, thus keeping all unconditional
expectations linear. This condition aids the mathematical tractability of the model.

To guarantee affinity, certain conditions must be met. The following theorem specifies
necessary and sufficient conditions for affinity. These conditions are expressed in terms
of the vectors $\vece_1, \ldots, \vece_k$, which are the basis of a $k$-dimensional vector
space, where, for $i\in [k]$, we have 
	$$\vece_i = (\underbrace{0,0, \ldots 0}_{i - 1 \ \mbox{times}}, 1 , \underbrace{0,0, \ldots 0}
	      _{k - i \ \mbox{times}}){.}$$
 
The affinity will be built around the $k\times k$ submatrix of $\matM$ corresponding to the rows labeled $s\, \vece_i$, for $i \in [k]$. This submatrix is filled with a set of integers
that are balanced (that is, the sum across any row in the submatrix adds up to $b$). We call this square submatrix matrix the {\em core}, and denote it by~$\matA$; we refer to the $i^{th}$ row in  $\matA$ as
$\veca_{s\, \vece_i}$ (which is also $\vecm_{s\,\vece_i}$). To ensure tenability, we assume all the entries of 
$\matA$ are nonnegative, except possibly the diagonal elements---if one such element is negative, we assume it must be greater than or equal to $-s$. The initial number $\tau_{0}$ must be at least $s$, too. \footnote{This class may be expanded to cases where a diagonal element is less than $-s$, given specific starting conditions and column configurations for $\mathbf{A}$ are met.}

A version of the following theorem appears in~\cite{KubaX}. 
\begin{theorem}
\label{Thm:affine}
A $(k,s,b)$-urn with core
$\matA$  is affine, if and only if
$$\vecm_{\vecs} = \sum_{i=1}^k \frac {s_i} s\, \veca_{s\,\vece_i } = \frac 1 s \, \vecs \, \matA,
  \qquad  \mbox{for all \ }  
\vecs\in \Delta_{k,s}.$$
\end{theorem}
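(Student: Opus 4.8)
\noindent\textit{Proof proposal.}
The plan is to compute $\E[\vecX_n\given\field_{n-1}]$ exactly, as a function of the current composition, and then to characterise when that function is linear. Since the sample is drawn without replacement, the conditional probability of drawing the profile $\vecs\in\Delta_{k,s}$ from a state with composition $\vecX_{n-1}$ is the multivariate hypergeometric weight $\binom{s}{\vecs}(\vecX_{n-1})_\vecs/(\tau_{n-1})_s$, and that draw triggers the addition of $\vecm_\vecs$; hence
\[
  \E[\vecX_n\given\field_{n-1}]=\vecX_{n-1}+\frac{1}{(\tau_{n-1})_s}\sum_{\vecs\in\Delta_{k,s}}\binom{s}{\vecs}(\vecX_{n-1})_\vecs\,\vecm_\vecs ,
\]
with $\tau_{n-1}=b(n-1)+\tau_0$ a deterministic integer. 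So the urn is affine exactly when, for every $n$, the sum on the right is a linear function of the composition along the hyperplane $\{{\bf y}\ge{\bf 0}:\ |{\bf y}|=\tau_{n-1}\}$ of admissible states, where $|{\bf y}|:=\sum_{i=1}^k y_i$.

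The engine is a pair of Chu--Vandermonde--type identities, valid for every composition ${\bf y}$:
\[
  \sum_{\vecs\in\Delta_{k,s}}\binom{s}{\vecs}({\bf y})_\vecs=(|{\bf y}|)_s,\qquad
  \sum_{\vecs\in\Delta_{k,s}}\binom{s}{\vecs}({\bf y})_\vecs\,\vecs=s\,(|{\bf y}|-1)_{s-1}\,{\bf y}.
\]
The first is~\eqref{Eq:falling}; the second I would prove combinatorially for integer ${\bf y}$ (its $j$-th coordinate counts, over all ordered $s$-tuples of distinct balls drawn from ${\bf y}$, the total number of colour-$j$ balls appearing, which by symmetry of the $s$ slots is $s$ times the number of such tuples whose first ball has colour $j$, namely $s\,y_j(|{\bf y}|-1)_{s-1}$), then extend it to all ${\bf y}$, both sides being polynomials. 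Granting the asserted identity $\vecm_\vecs=\tfrac1s\vecs\matA$, the second identity gives
\[
  \sum_{\vecs}\binom{s}{\vecs}(\vecX_{n-1})_\vecs\vecm_\vecs
  =\frac1s\Bigl(\sum_{\vecs}\binom{s}{\vecs}(\vecX_{n-1})_\vecs\,\vecs\Bigr)\matA
  =(\tau_{n-1}-1)_{s-1}\,\vecX_{n-1}\matA
  =\frac{(\tau_{n-1})_s}{\tau_{n-1}}\,\vecX_{n-1}\matA ,
\]
so $\E[\vecX_n\given\field_{n-1}]=\vecX_{n-1}\bigl(\matI+\tfrac1{\tau_{n-1}}\matA\bigr)$, affine with the deterministic matrix $\matC_n=\matI+\matA/\tau_{n-1}$; this settles the ``if'' direction.

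For the converse, assume the urn affine and write $\vecm_\vecs=\tfrac1s\vecs\matA+\vecr_\vecs$. By the definition of the core, the row $\veca_{s\vece_i}$ of $\matA$ equals $\vecm_{s\vece_i}$, so $\vecr_{s\vece_i}={\bf 0}$ for every $i\in[k]$, and the defect $g({\bf y}):=\sum_{\vecs}\binom{s}{\vecs}({\bf y})_\vecs\vecr_\vecs$ lies in the span of $\{({\bf y})_\vecs:\vecs\in\Delta_{k,s},\ \vecs\neq s\vece_i\}$. The computation just made shows the $\tfrac1s\vecs\matA$ part of $\vecm_\vecs$ already contributes a linear function, so affinity forces $g$ to be linear along some hyperplane $\{|{\bf y}|=\tau\}$ with $\tau\ge s$. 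I would finish with linear algebra: (a) for $\tau\ge s$ the restrictions to $\{|{\bf y}|=\tau\}$ of the falling factorials $\{({\bf y})_\vecs:\vecs\in\Delta_{k,s}\}$ are linearly independent --- there are $\binom{s+k-1}{s}$ of them, matching the dimension of the degree-$\le s$ polynomials on the hyperplane, and independence is read off an explicit evaluation at suitable lattice points of the simplex, where the evaluation matrix is triangular with nonzero diagonal; (b) by the two identities above, every linear function on the hyperplane is a combination of the functions ${\bf y}\mapsto\sum_\vecs\binom{s}{\vecs}s_j({\bf y})_\vecs$, $j\in[k]$, since $y_j=\tfrac{1}{s(\tau-1)_{s-1}}\sum_\vecs\binom{s}{\vecs}s_j({\bf y})_\vecs$ on the hyperplane and constants are combinations of the $y_j$. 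Expanding the linear function $g$ in the basis of (a) and matching the coefficient of each $({\bf y})_\vecs$ against (b) gives $\vecr_\vecs=\sum_{l=1}^k s_l\,\vecb_l$ for fixed vectors $\vecb_1,\dots,\vecb_k$; setting $\vecs=s\vece_i$ and using $\vecr_{s\vece_i}={\bf 0}$ forces $\vecb_i={\bf 0}$ for all $i$, hence $\vecr_\vecs={\bf 0}$ for all $\vecs$, i.e.\ $\vecm_\vecs=\tfrac1s\vecs\matA$.

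The part I expect to be the real obstacle is making this converse rigorous, for two reasons. First, affinity constrains $\E[\vecX_n\given\vecX_{n-1}={\bf y}]$ only at compositions ${\bf y}$ reachable after $n-1$ draws; one must argue that for $n$ large --- or, equivalently, because affinity is insensitive to the admissible initialisation --- these reachable states are rich enough (Zariski-dense in some hyperplane $\{|{\bf y}|=\tau\}$, $\tau\ge s$) that equality there of a degree-$s$ and a degree-$1$ polynomial is a genuine polynomial identity. Second, one needs the independence fact (a); a convenient way to package both this and the coefficient bookkeeping is to first restrict attention to compositions supported on two colours, which reduces the claim for every $\vecs$ with at most two nonzero parts to the two-colour case (where (a) is elementary), and then induct on the number of colours present to reach general~$\vecs$.
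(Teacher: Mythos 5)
Your proposal is correct. The forward direction is the paper's argument in different clothing: your second Chu--Vandermonde identity, $\sum_{\vecs}\binom{s}{\vecs}({\bf y})_\vecs\,\vecs=s(|{\bf y}|-1)_{s-1}\,{\bf y}$, is precisely the statement that the multivariate hypergeometric sample $\vecQ_n$ has conditional mean $\frac{s}{\tau_{n-1}}\vecX_{n-1}$, which the paper simply cites from the literature; dividing by $(\tau_{n-1})_s$ then yields the same affine form $\matI+\frac{1}{\tau_{n-1}}\matA$. The converse is where you take a genuinely different route. The paper first argues by a degree count that $\matC_n-\matI$ must equal $\frac{1}{\tau_{n-1}}\matB_n$ for a history-free $\matB_n$, expands $(\tau_{n-1}-1)_{s-1}$ by the falling-factorial multinomial theorem so that \emph{both} sides become degree-$s$ combinations of the $(\vecX_{n-1})_\vecs$, and extracts coefficients; the only independence it needs is that of the $({\bf y})_\vecs$ as polynomials in $k$ free variables, which is immediate from leading monomials. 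You instead subtract the conjectured answer and show the defect $g$ vanishes; this buys a cleaner identification $\vecr_\vecs=\sum_l s_l\vecb_l$ (after which specializing to $\vecs=s\,\vece_i$ finishes exactly as in the paper), but at the price of a stronger lemma, namely linear independence of the \emph{restrictions} of $\{({\bf y})_\vecs\}_{\vecs\in\Delta_{k,s}}$ to the hyperplane $\{|{\bf y}|=\tau\}$. That lemma does hold for $\tau\ge s$ and your triangular-evaluation sketch goes through: evaluate at the $\binom{s+k-1}{s}$ lattice points $\vecs'+(\tau-s)\vece_k$, $\vecs'\in\Delta_{k,s}$; the entry indexed by $(\vecs,\vecs')$ vanishes unless $s_i\le s_i'$ for all $i<k$, and the diagonal entries $\prod_{i<k}s_i'!\cdot(s_k'+\tau-s)_{s_k'}$ are nonzero, so the evaluation matrix is triangular with respect to that partial order and invertible. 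Hence this is an extra cost, not a gap. The remaining issue you flag --- that affinity constrains the conditional expectation only at reachable compositions, so one must upgrade an equality on those states to a polynomial identity on the hyperplane --- is genuine, but it is equally present and equally unaddressed in the paper's own coefficient extraction, so it does not separate your argument from the published one.
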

\begin{proof}
Recall that for a vector in the basis, 
$\vece_i = (e_1, \ldots, e_k )$, all the components are 0, except for $e_i$, which is 1. 
By construction, in the core we have the trivial relationship
\begin{align*}
\vecm_{s \,\vece_i}= \veca_{s \,\vece_i}
      =  \sum_{i=1}^k e_i \, \veca_{s\,\vece_i}= \vece_i \,\matA, \qquad  \mbox{for\ } i\in [k].
\end{align*} 
 
Let  the sample in the $n^\textnormal{th}$ draw be recorded in $\vecQ_n$.
The ball addition rules translate into a stochastic recurrence:
\begin{equation}
\vecX_n =  \vecX_{n-1} + \vecm_{\vecQ_n},
\label{Eq:Qn}
\end{equation}
giving rise to the conditional expectation
\begin{align*}
\E[\vecX_n\given \field_{n-1}] 
     &=  \vecX_{n-1} + \sum_{s\in \Delta_{k, s}} \vecm_\vecs\,
                  \Prob(\vecQ_n= \vecs\given \field_{n-1}) \\
     &=  \vecX_{n-1} + \sum_{{0 \le s_\ell,\, \ell\in [k]}  \atop{s_1+ \dots +s_k = s}} 
           \vecm_\vecs\, {\frac {{X_{n-1,1} \choose s_1}
             \cdots   {X_{n-1,k} \choose s_k}} 
          {{\tau_{n-1} \choose s}}.}
\end{align*}          

Assume the scheme is affine.
For affinity to hold,  $\E[\vecX_n\given \field_{n-1}]$ should be equated to 
{$\vecX_{n-1}\matC_n $},
for some $k\times k$ matrix $\matC_n$, for each $n\ge1$. This is the same 
as
\begin{align*} 
\vecX_{n-1}(\matC_n  - \matI)    (\tau_{n-1})_s
        =\sum_{{0 \le s_\ell,\, \ell\in [k]}  \atop {s_1+ \cdots +s_{k} = s}} 
        \vecm_\vecs\, {s \choose\vecs}
                   (X_{n-1,1})_{s_1} \cdots   (X_{n-1,k})_{s_k}.
\end{align*} 
Using the fact that $\tau_{n-1} = \sum_{i={1}}^k X_{n-1, i}$, 
write the latter relation in the form
\begin{align} 
&(X_{n-1,1}, \ldots, X_{n-1,k})(\mathbf{C}_n - \matI) \Big(\sum_{i=1}^k X_{n-1,i}\Big)_s \\
         &\qquad \qquad =   \sum_{{0 \le s_\ell,\, \ell\in [k]}  \atop {s_1+ \cdots +s_k = s}} 
     \vecm_\vecs\, {s \choose \vecs}
        (\vecX_{n-1})_\vecs. 
        \label{Eq:latter}
\end{align} 
On the left-hand, the highest power of $X_{n-1,i}$, for $i\in [k]$, is $s+1$, while 
that on the right-hand side is only $s$, unless $\mathbf{C}_n  - \matI$ rids the left-hand side
of these powers. So, $\mathbf{C}_n-\matI$ {\em can be} in the form $\frac 1 {\tau_{n-1}} \matB_n$,
for some matrix $\matB_n$ that does not depend on the history of composition vectors.
Write 
$$\matB_n = [\vecb_{n,1}^T, \ldots, \vecb_{n,k}^T],$$
where $\vecb_{n,j}^T$ is the $j^{th}$ {\em column} vector of $\matB_n$, for $j\in [k]$. 
So, we write the display~{(\ref{Eq:latter})} as
\begin{align*} 
\vecX_{n-1}{(\tau_{n-1} -1})_{s-1} \, [\vecb_{n,1}^T, \ldots, \vecb_{n,k}^T]
     =   \sum_{{0 \le s_\ell,\, \ell\in [k]}  \atop {s_1+ \cdots +s_{k} = s}} 
          \vecm_\vecs  {s \choose \vecs} (\vecX_{n-1})_\vecs. 
\end{align*} 

We utilize the multinomial theorem stated in~(\ref{Eq:falling}). 
For any $i\in k$, expand the falling factorial  ${(\tau_{n-1}-1)_{s-1}=\big(\big(\sum_{i=1}^k X_{n-1,i}
       \big) -1\big)_{s-1}}$ as
\begin{align*}
& (X_{n-1,1} +\cdots + X_{n-1,i-1} + (X_{n-1,i} -1) + X_{n-1,i+1} 
           + \cdots +X_{n-1,k})_{s-1}\\
       &\qquad\qquad = \sum_{{0 \le r_\ell,\, \ell\in [k]}  \atop {r_1+ \cdots +r_k= s-1}} 
    {s-1 \choose{\bf r}} (X_{n-1, i} - 1)_{r_\ell} \prod_{j=1\atop j\not = i}^s  (X_{n-1, j})_{r_j}.
\end{align*}    
This being valid for any $i\in [k]$, we get
\begin{align*}
& \vecX_{n-1} \sum_{{0 \le r_\ell,\, \ell\in [k]}  \atop {r_1+ \cdots +r_k= s-1}} 
    {s-1 \choose{\bf r}} (X_{n-1, i} - 1)_{r_\ell} \prod_{j=1\atop j\not = i}^s  (X_{n-1, j})_{r_j} \, [\vecb_{n,1}^T, \ldots, \vecb_{n,k}^T]\\
        & \qquad \qquad =   \sum_{{0 \le s_\ell,\, \ell\in [k]}  \atop {s_1+ \cdots +s_k = s}} 
     \vecm_\vecs {s \choose \vecs}
                (\vecX_{n-1})_\vecs. 
\end{align*} 
To match coefficients of similar falling factorials, execute the vectorial product 
on the left

$$\sum_{{0 \le r_\ell,\, \ell\in [k]}  \atop {r_1+ \cdots +r_k= s-1}} 
    {s-1 \choose{\bf r}} \sum_{i=1}^k(\vecX_{n-1})_{{\bf r} + \vece_i}\vecb_{n,i}^T
                   =   \sum_{{0 \le s_\ell,\, \ell\in [k]}  \atop { s_1+ \cdots +s_k = s}} 
                       \vecm_\vecs {s \choose \vecs}  (\vecX_{n-1})_\vecs.$$ 
Extracting the coefficient of $(\vecX_{n-1})_\vecs$ on both sides, we get
$$\sum_{i=1}^k {s-1 \choose \vecs - \vece_i} \vecb_{n,i}^T =    \vecm_\vecs {s \choose \vecs}.$$
Applying this to a row in the core, where {$\vecs= s\,\vece_j$, for some $j\in [k]$, 
we get $\vecb_{n,j}^T = s\,\vece_j$ (as all the multinomial coefficients
include negative lower indicies, except the $j^\textnormal{th}$)}. It follows that
$$ \vecm_\vecs = \frac {s_1! \, s_2!\, \cdots s_k!} {s!}\sum_{i=1}^k {s-1 \choose
       s_1,  \cdots s_{i-1} , s_i-1,  s_{i+1}, \ldots, s_k} \veca_{s\,\vece_i} = \sum_{i=1}^k
            \frac {s_i} s \, \veca_{s\,\vece_i} .$$

For the converse,  assume the affinity condition in the statement of the theorem.
Recall that $\vecQ_n$ is the random sample at time $n$, which has
a conditional multi-hypergeometric distribution   (given $\vecX_{n-1}$)
with parameters $\tau_{n-1}$, $\vecX_{n-1}$, and $s$. 
Now, we write the 
stochastic recurrence~(\ref{Eq:Qn}) in the form
\begin{equation}
\vecX_n  = \vecX_{n-1} + \frac 1 s  \,\vecQ_n \, \matA.
\label{Eq:Xnconditional}    
\end{equation}    
The mean of the multi-hypergeometric distribution is well known~\cite{Kendall}.
In vectorial form, the conditional mean (given $\vecX_{n-1}$) is
 $\frac s {\tau_{n-1}} \vecX_{n-1}$. 
We thus have the conditional expectation
\begin{equation}
\E[\vecX_n \given \field_{n-1}]  
    = \vecX_{n-1} + \frac 1 {\tau_{n-1}} \vecX_{n-1}\, \matA= \vecX_{n-1}\Big(\matI + \frac
              1 {\tau_{n-1}}\, \matA \Big) ,
              \label{Eq:cond}
\end{equation}
              which is an affine form.
\end{proof}
\begin{remark}
It is worthy to describe here $(k, s, b)$-urn schemes from a qualitative viewpoint. There is a saying that states ``the whole is equal to the sum of its parts.'' Here, each color plays an independent role on how the urn changes within a given step. That is, a ball of color 1 in the sample creates one set of changes, a ball of color 2 in the sample produces another set of changes, and so on. Thus, $ \vecm_\vecs $ is merely the sum of the separate impacts of each ball color, with no addition effect produced by a specific sample combination. Furthermore, this class simplifies to traditional study of single-draw urn models when $s=1$.
\end{remark}
\begin{cor}
\label{Cor:exactmean}
\begin{equation}
\bm{\mu}_n := \E[\vecX_n] = \vecX_0 \prod_{i=0}^{n-1} \Bigl(\matI + \frac{1}{\tau_i}
          \, \matA\Bigr).
\label{Eq:muAffine}
\end{equation}
\end{cor}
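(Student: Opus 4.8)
The plan is to derive the closed form by iterating the one-step conditional mean identity~(\ref{Eq:cond}) established in the proof of Theorem~\ref{Thm:affine}, combined with the tower property of conditional expectation and a short induction on $n$.

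First I would record the base case: for $n=0$ the right-hand side is the empty matrix product, which by convention equals $\matI$, so the claimed formula reads $\bm{\mu}_0 = \vecX_0$, matching the definition $\bm{\mu}_0 = \E[\vecX_0] = \vecX_0$ (the initial composition is deterministic). Next, for the inductive step, I would start from~(\ref{Eq:cond}), namely $\E[\vecX_n\given\field_{n-1}] = \vecX_{n-1}\bigl(\matI + \tfrac{1}{\tau_{n-1}}\matA\bigr)$, and take unconditional expectations of both sides. Here the key observation is that since the urn is balanced, $\tau_{n-1} = b(n-1) + \tau_0$ is a \emph{deterministic} constant (not random), so the matrix $\matI + \tfrac{1}{\tau_{n-1}}\matA$ can be pulled out of the expectation on the right. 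By the tower property, $\E[\vecX_n] = \E\bigl[\E[\vecX_n\given\field_{n-1}]\bigr] = \E[\vecX_{n-1}]\bigl(\matI + \tfrac{1}{\tau_{n-1}}\matA\bigr) = \bm{\mu}_{n-1}\bigl(\matI + \tfrac{1}{\tau_{n-1}}\matA\bigr)$.

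Finally I would invoke the induction hypothesis $\bm{\mu}_{n-1} = \vecX_0 \prod_{i=0}^{n-2}\bigl(\matI + \tfrac{1}{\tau_i}\matA\bigr)$ and substitute, yielding
\begin{align*}
\bm{\mu}_n = \vecX_0 \prod_{i=0}^{n-2}\Bigl(\matI + \frac{1}{\tau_i}\matA\Bigr)\cdot\Bigl(\matI + \frac{1}{\tau_{n-1}}\matA\Bigr) = \vecX_0 \prod_{i=0}^{n-1}\Bigl(\matI + \frac{1}{\tau_i}\matA\Bigr),
\end{align*}
which completes the induction. The only point requiring care is the non-commutativity of matrix multiplication: because $\vecX_n$ is a row vector and the affine map acts by right multiplication, the factors in the product must be appended on the right in increasing order of $i$, so the product $\prod_{i=0}^{n-1}$ is understood as $\bigl(\matI+\tfrac1{\tau_0}\matA\bigr)\bigl(\matI+\tfrac1{\tau_1}\matA\bigr)\cdots\bigl(\matI+\tfrac1{\tau_{n-1}}\matA\bigr)$. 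There is no real obstacle here; the statement is essentially a bookkeeping consequence of affinity, and the mild subtlety is just keeping the ordering of the matrix factors consistent with the row-vector convention.
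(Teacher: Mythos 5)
Your proof is correct and follows exactly the paper's approach: the paper disposes of this corollary in one line ("direct iteration of~(\ref{Eq:cond})"), and your induction with the tower property and the observation that $\tau_{n-1}$ is deterministic is precisely the spelled-out version of that iteration. The remark about the ordering of the matrix factors under the row-vector convention is a sensible clarification but does not change the argument.
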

\begin{proof}
The corollary follows from direct iteration of~(\ref{Eq:cond}).
\end{proof}
\begin{example}
The matrix $\matM$ in Example~\ref{Ex:MMatrix3} is affine. It is built from the core
 \begin{align*}
 \matA=\small\begin{matrix}
           300 \cr
           030 \cr
		   003 
\end{matrix}
\begin{pmatrix}
             3 & 3&3 \cr
			6 & 0&3 \cr
			0 & 0&9 
\end{pmatrix}
       = \begin{matrix}
            3\vece_1 \cr
            3\vece_2 \cr
		   3\vece_3
\end{matrix}
\begin{pmatrix}
             3 & 3&3 \cr
			6 & 0&3 \cr
			0 & 0&9 
\end{pmatrix}
\end{align*}
As an instantiation of the computation of a row of $\matM$, as given
in Theorem~\ref{Thm:affine}, take the sample $\vecs =(2,0,1)$; 
the entry $\vecm_\vecs$ in $\matM$ is
obtained as
$$\vecm_{(2,0,1)} =\frac 2 3 \, \veca_{3\,\vece_1} + \frac 0 3 \, \veca_{3\,\vece_2} + \frac 1 3\,
              \veca_{3\, \vece_3} = \frac 2 3 (3, 3, 3)  +\frac 1 3(0, 0, 9) = (2,2,5).$$
\end{example}
\section{Classification of cores}
There is a large variety of $k\times k$ matrices that can be used as cores
to build replacement matrices for urn schemes. 
As already mentioned, our focus in this paper is on $(k,s,b)$-urn schemes.
Results are more easily obtained{,} when the core of the replacement matrix is irreducible. Loosely speaking,
this means that all the colors are essential (or communicating). More precisely,
this is defined in terms of dominant colors. Color $i\in [k]$ is said to be {\em dominant}, 
when we start with only balls of color $i$ and almost surely balls of color 
$j\in [k]$ appear at some future point in time, for every $j\not = i$. An urn scheme is then called irreducible, if every color in its
core is dominant. At the level of matrix theory, irreducibility means that $(\matI + \matA)^r$ is comprised of
nonnegative elements{,} for some $r\ge 0$. 

When an urn is irreducible,
the eigenvalue of $\mathbf{A}$ with the largest real part, $\lambda_1$, is real, positive and simple (of multiplicity 1), {a} consequence of a classic Perron-Frobenius theorem~(see~\cite{Horn}, Section 8.4).   

As in the sequel, the results for 
an affine irreducible $(k,s,b)$-urn scheme are governed 
by the ratio of $\Re(\lambda_2)$ to $\lambda_1$. We call this ratio the {\em core index} and denote it by $\Lambda$, which is always strictly less than 1 in irreducible core matrix. 

A trichotomy arises:
\begin{itemize}
\item [(a)] small-index core: where $\Lambda < \nicefrac 1 2$.
\item [(b)] critical-index core: where $\Lambda = \nicefrac 1 2$.
\item [(c)] large-index core: where $\Lambda > \nicefrac 1 2$.
\end{itemize}
Occasionally, we may just call the schemes small, critical, or large.

\section{A multivariate martingale underlying the $(k,s,b)$-urn class}
{The conditional expectation in~(\ref{Eq:cond})} leads to a martingale formulation.
We use the operator~$\nabla$ to denote backward differences. 
At draw~$n$, we have
\begin{align*}
\E\big[\nabla \mathbf{X}_n \,\big|\, \mathbb{F}_{n-1}\big]= \E\Big[\frac1 s\mathbf{Q}_{n} 
   \, \matA\, \Big|\, \mathbb{F}_{n-1}\Big]=\frac{1}{\tau_{n-1}}\mathbf{X}_{n-1}\, \matA.
\end{align*}

Define $\mathbf{Y}_n := \nabla \mathbf{X}_n - \frac{1}{\tau_{n-1}}\mathbf{X}_{n-1}\, \matA.$ 
Then, $\mathbf{Y}_n$ is $\mathbb{F}_{n-1}$-measurable and 
\begin{equation}
\E\big[\mathbf{Y}_n \,\big|\, \mathbb{F}_{n-1}\big]=\mathbf{0}.
\label{EQ:YZero}
\end{equation}
Thus, we have the recurrence
\begin{equation}
\mathbf{X}_n=\Big( \mathbf{I}+\frac{1}{\tau_{n-1}}\, \matA\Big)\mathbf{X}_{n-1} 
     + \mathbf{Y}_n.
\label{Eq:linearstochastic}
\end{equation}

In the proofs, we utilize the following matrix products
\begin{equation}
\label{EQ:FProd}
\mathbf{F}_{i,j}:=\prod_{i\le \ell <j} \Big(\mathbf{I} +\frac{1}{\tau_{\ell}}\, \mathbf{A}\Big), 
\qquad 0\le i<j,
\end{equation} 
to unravel the martingale differences in the process  $\mathbf{X}_n$. 
The product $\mathbf{F}_{i,j}$ can also be written {in terms of the polynomial function $f_{i,j}$}, where 
\begin{equation}
f_{i,j}(z):= \prod_{i\le \ell <j} \Big(1 +\frac{1}{\tau_{\ell}}z\Big)= \frac{\Gamma(j+\frac{1}{b}(\tau_0+z))}{\Gamma(j+\frac{\tau_0}{b})} \times\frac{\Gamma(i+\frac{\tau_0}{b}))}{\Gamma(i+\frac{1}{b}(\tau_0+z))}.
\label{Eq:polynomial}
\end{equation} 

We order the eigenvalues of $\mathbf{A}$ in the fashion specified in Section \ref{subsec:Notation3} and note that $\lambda_1={b}$ is a simple eigenvalue and 
$\Re(\lambda_i) < \lambda_1$, for $i > 1$, since $\mathbf{A}$ is irreducible. Accordingly, there exist corresponding left and right eigenvectors of 
$\mathbf{A}$, to be called $\mathbf{v}_1$ and $\mathbf{w}_1$, unique up to normalization. 
Since $\mathbf{A}$ is row-balanced, all the components of $\mathbf{w}_1$ are equal.
We may set 
$\mathbf{w}_1=\mathbf{1}$ and normalize $\mathbf{v}_1$ such that $\mathbf{w}_1\mathbf{v}_1^T=1$. This projection results in $\mathbf{P}_{\lambda_1}=\mathbf{w}_1^T \, \mathbf{v}_1$, and so, we have
\begin{equation}
\label{EQ:P1v}
\mathbf{v} \, \mathbf{P}_{\lambda_1} =\mathbf{v}\,\mathbf{1}^T\, \mathbf{v}_1, \textnormal{ for any vector }\mathbf{v} \in \mathbb{C}^k.
\end{equation}

The following lemmas provide important properties that can be applied to {the polynomial function~(\ref{Eq:polynomial})}. We omit the proofs as they are nearly identical to those in~\cite{Janson2020}.

\begin{lemma}
\label{Lemma:PFij}
For $1\le i\le j$ and $\lambda \in \sigma(\mathbf{A})$, we have

\begin{align*}
\mathbf{P}_\lambda \mathbf{F}_{i,j} =  \mathbf{  {O}} \Big( \Big(\frac{j}{i}\Big)^{\Re(\lambda)/b}\Big(1+\ln\Big( \frac{j}{i}\Big)\Big)^{\nu_\lambda} \Big).
\end{align*}
Furthermore, for any $\nu \ge \nu_\lambda$, as $i, j \to \infty$ with $i\le j$, we have
\begin{align*}
\mathbf{P}_\lambda \mathbf{F}_{i,j}=&\, \frac{1}{\nu!} \Big(\frac{j}{i}\Big)^{\lambda/b} \Big(\frac{\ln \big( \frac{j}{i}\big)}{  b}\Big)^{\nu} \mathbf{P}_\lambda  \mathbf{N}^\nu_\lambda 
+\mathbf{o} \Big( \Big(\frac{j}{i}\Big)^{\Re(\lambda)/b}\ln^{\nu}\Big( \frac{j}{i}\Big) \Big) \\
&\qquad {} +   \mathbf{  {O}} \Big( \Big(\frac{j}{i}\Big)^{\Re(\lambda)/b}\Big(1+\ln^{\nu-1}\Big( \frac{j}{i}\Big)\Big) \Big).
\end{align*} 

\end{lemma}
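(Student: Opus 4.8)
The plan is to push $\mathbf{P}_\lambda$ through the product $\mathbf{F}_{i,j}$, thereby reducing the matrix quantity to a scalar Gamma-quotient multiplied by a family of elementary symmetric polynomials, and then to estimate those two scalar ingredients and reassemble.

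First I would use that $\mathbf{P}_\lambda$ is a polynomial in $\mathbf{A}$, hence commutes with every factor $\mathbf{I}+\tau_\ell^{-1}\mathbf{A}$ of $\mathbf{F}_{i,j}$; together with $\mathbf{P}_\lambda^2=\mathbf{P}_\lambda$, $\mathbf{A}\mathbf{P}_\lambda=\lambda\mathbf{P}_\lambda+\mathbf{N}_\lambda$ and $\mathbf{P}_\lambda\mathbf{N}_\lambda=\mathbf{N}_\lambda\mathbf{P}_\lambda=\mathbf{N}_\lambda$ this gives $\mathbf{P}_\lambda\bigl(\mathbf{I}+\tau_\ell^{-1}\mathbf{A}\bigr)=\mathbf{P}_\lambda\bigl((1+\lambda/\tau_\ell)\mathbf{I}+\tau_\ell^{-1}\mathbf{N}_\lambda\bigr)$. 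Multiplying over $i\le\ell<j$, pulling out $\prod_{i\le\ell<j}(1+\lambda/\tau_\ell)=f_{i,j}(\lambda)$ from~(\ref{Eq:polynomial}), and expanding the remaining product of commuting factors $\mathbf{I}+(\tau_\ell+\lambda)^{-1}\mathbf{N}_\lambda$ (with $\mathbf{N}_\lambda^{\nu_\lambda+1}=\mathbf{0}$), I obtain the exact identity
\begin{align*}
\mathbf{P}_\lambda\mathbf{F}_{i,j}=f_{i,j}(\lambda)\Bigl(\mathbf{P}_\lambda+\sum_{m=1}^{\nu_\lambda}e_m^{(i,j)}\,\mathbf{N}_\lambda^{m}\Bigr),\qquad e_m^{(i,j)}:=e_m\Bigl(\tfrac{1}{\tau_i+\lambda},\ldots,\tfrac{1}{\tau_{j-1}+\lambda}\Bigr),
\end{align*}
with $e_m$ the $m$-th elementary symmetric function (the denominators $\tau_\ell+\lambda$ do not vanish, by tenability), using $\mathbf{P}_\lambda\mathbf{N}_\lambda^{m}=\mathbf{N}_\lambda^{m}$. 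The problem is now the estimation of the scalars $f_{i,j}(\lambda)$ and $e_m^{(i,j)}$.

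Next I would estimate those scalars. Writing $c_\ell:=(\tau_\ell+\lambda)^{-1}=O(1/\ell)$, I use $\sum_{i\le\ell<j}\tau_\ell^{-1}=\tfrac1b\ln(j/i)+O(1/i)$ and $\sum_{i\le\ell<j}\tau_\ell^{-2}=O(1)$ to deduce, from $\log|1+\lambda/\tau_\ell|=\Re(\lambda)\,\tau_\ell^{-1}+O(\tau_\ell^{-2})$, the uniform bound $|f_{i,j}(\lambda)|=\Theta\bigl((j/i)^{\Re(\lambda)/b}\bigr)$ for all $1\le i\le j$, and, from the Gamma representation in~(\ref{Eq:polynomial}) together with $\Gamma(n+a)/\Gamma(n+c)=n^{a-c}(1+O(1/n))$, the sharper $f_{i,j}(\lambda)=(j/i)^{\lambda/b}\bigl(1+O(1/i)\bigr)$. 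For the elementary symmetric functions I use the power sums $p_1:=\sum_{i\le\ell<j}c_\ell=\tfrac1b\ln(j/i)+O(1/i)$ and $p_r:=\sum_{i\le\ell<j}c_\ell^{\,r}=O(1)$ for $r\ge2$; then the crude bound $|e_m^{(i,j)}|\le\tfrac1{m!}\bigl(\sum_{i\le\ell<j}|c_\ell|\bigr)^{m}=O\bigl((1+\ln(j/i))^{m}\bigr)$ and, via Newton's identities, the refinement $e_m^{(i,j)}=\tfrac1{m!}\,p_1^{m}+O\bigl((1+\ln(j/i))^{m-2}\bigr)$.

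Finally I would reassemble. The first displayed bound of the lemma is immediate, since each summand $f_{i,j}(\lambda)e_m^{(i,j)}\mathbf{N}_\lambda^{m}$ is $\mathbf{O}\bigl((j/i)^{\Re(\lambda)/b}(1+\ln(j/i))^{m}\bigr)$ and $m=\nu_\lambda$ dominates. For the refined expansion with $\nu=\nu_\lambda$, the leading contribution is the $m=\nu_\lambda$ term: substituting $f_{i,j}(\lambda)=(j/i)^{\lambda/b}(1+o(1))$ and $e_{\nu_\lambda}^{(i,j)}=\tfrac1{\nu_\lambda!}\bigl(\tfrac{\ln(j/i)}{b}\bigr)^{\nu_\lambda}+\bigl(\text{lower order}\bigr)$ yields the stated main term $\tfrac1{\nu_\lambda!}(j/i)^{\lambda/b}\bigl(\tfrac{\ln(j/i)}{b}\bigr)^{\nu_\lambda}\mathbf{P}_\lambda\mathbf{N}_\lambda^{\nu_\lambda}$; the $O(1/i)$ correction of $f_{i,j}(\lambda)$ multiplied by the main scalar is the only piece requiring the $\mathbf{o}\bigl((j/i)^{\Re(\lambda)/b}\ln^{\nu_\lambda}(j/i)\bigr)$ term, while all remaining errors — the Newton corrections $O(p_1^{\nu_\lambda-2})$, the $O(1/i)$ error inside $p_1^{\nu_\lambda}$, and the summands with $m\le\nu_\lambda-1$ — fall into $\mathbf{O}\bigl((j/i)^{\Re(\lambda)/b}(1+\ln^{\nu_\lambda-1}(j/i))\bigr)$. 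The case $\nu>\nu_\lambda$ is then trivial, since $\mathbf{P}_\lambda\mathbf{N}_\lambda^{\nu}=\mathbf{0}$ and the crude bound already lies inside $\mathbf{O}\bigl((j/i)^{\Re(\lambda)/b}(1+\ln^{\nu-1}(j/i))\bigr)$. The one genuinely delicate point — the main obstacle — is exactly this last bookkeeping: keeping the $O(1/i)$ corrections and the bounded-but-nonvanishing power sums $p_r$ ($r\ge2$) uniformly on the correct side of the $\mathbf{o}/\mathbf{O}$ split, in particular all the way down to $i=1$ in the crude bound. These are precisely the estimates worked out in~\cite{Janson2020}, which is why we do not reproduce them here.
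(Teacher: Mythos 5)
Your argument is correct and is essentially the proof that the paper defers to (Janson 2020, Lemmas 4.2--4.3): your exact identity $\mathbf{P}_\lambda\mathbf{F}_{i,j}=f_{i,j}(\lambda)\bigl(\mathbf{P}_\lambda+\sum_m e_m^{(i,j)}\mathbf{N}_\lambda^m\bigr)$ is just Janson's Taylor form $\sum_m \frac{f_{i,j}^{(m)}(\lambda)}{m!}\,\mathbf{P}_\lambda\mathbf{N}_\lambda^m$ written with the scalar $f_{i,j}(\lambda)$ factored out, and your estimates of $f_{i,j}(\lambda)$ and of the elementary symmetric functions via power sums are the same ones used there. One small nit: the claim that the denominators $\tau_\ell+\lambda$ cannot vanish ``by tenability'' is not actually justified (a negative real eigenvalue could in principle equal $-\tau_\ell$ for one value of $\ell$); the derivative formulation $f_{i,j}^{(m)}(\lambda)/m!$ sidesteps this, or one simply notes that at most one factor can vanish and handles it separately without affecting the asymptotics.
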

\begin{lemma}
When $\lambda_1=b$ is a simple eigenvalue, then for $0\le i \le j$,  we have
\begin{align}
\label{EQ:SimpleFP1}
\mathbf{P}_{\lambda_1}\, \mathbf{F}_{i,j} = f_{i,j} (\lambda_1)\, \mathbf{P}_{\lambda_1} = \frac{j+\frac{\tau_0}{b}}{i+\frac{\tau_0}{b}} \,\mathbf{P}_{\lambda_1}.
\end{align}
\end{lemma}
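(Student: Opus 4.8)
The plan is to exploit the simplicity of $\lambda_1=b$. Because the Jordan block attached to a simple eigenvalue is $1\times1$, we have $\nu_{\lambda_1}=0$ and hence $\mathbf{N}_{\lambda_1}=\mathbf{0}$; the spectral relations recalled above then reduce to $\matA\,\mathbf{P}_{\lambda_1}=\mathbf{P}_{\lambda_1}\,\matA=\lambda_1\mathbf{P}_{\lambda_1}$, and $\mathbf{P}_{\lambda_1}$ is idempotent, $\mathbf{P}_{\lambda_1}^2=\mathbf{P}_{\lambda_1}$ (a consequence of $\sum_{\lambda}\mathbf{P}_\lambda=\matI$ together with $\mathbf{P}_{\lambda_i}\mathbf{P}_{\lambda_j}=\mathbf{0}$ for $\lambda_i\ne\lambda_j$). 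With this in hand, the proof is short and splits into the two asserted equalities.

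For the first equality I would left-multiply the definition~(\ref{EQ:FProd}) of $\mathbf{F}_{i,j}$ by $\mathbf{P}_{\lambda_1}$ and push the projection through the product one factor at a time. Since $\mathbf{P}_{\lambda_1}$ commutes with every factor $\matI+\tfrac{1}{\tau_\ell}\matA$, each absorption step uses $\mathbf{P}_{\lambda_1}\bigl(\matI+\tfrac{1}{\tau_\ell}\matA\bigr)=\mathbf{P}_{\lambda_1}+\tfrac{\lambda_1}{\tau_\ell}\mathbf{P}_{\lambda_1}=\bigl(1+\tfrac{\lambda_1}{\tau_\ell}\bigr)\mathbf{P}_{\lambda_1}$, and then idempotency turns the matrix product into a scalar multiple of $\mathbf{P}_{\lambda_1}$. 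A trivial induction on the number of factors gives $\mathbf{P}_{\lambda_1}\mathbf{F}_{i,j}=\bigl(\prod_{i\le \ell<j}(1+\tfrac{\lambda_1}{\tau_\ell})\bigr)\mathbf{P}_{\lambda_1}=f_{i,j}(\lambda_1)\,\mathbf{P}_{\lambda_1}$, by the definition of $f_{i,j}$ in~(\ref{Eq:polynomial}). For the second equality I would simply evaluate $f_{i,j}$ at $z=\lambda_1=b$: the quickest route is to note that $\tau_\ell=b\ell+\tau_0$, so $1+\tfrac{b}{\tau_\ell}=\tfrac{\tau_{\ell+1}}{\tau_\ell}$ and the product over $i\le\ell<j$ telescopes to $\tfrac{\tau_j}{\tau_i}=\tfrac{bj+\tau_0}{bi+\tau_0}=\tfrac{j+\tau_0/b}{i+\tau_0/b}$; alternatively one substitutes $\tfrac1b(\tau_0+b)=\tfrac{\tau_0}{b}+1$ into the Gamma-function form of $f_{i,j}$ and uses $\Gamma(x+1)=x\Gamma(x)$ to the same effect.

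There is essentially no hard step; the only point deserving a word of care is that the range here is $0\le i\le j$, whereas Lemma~\ref{Lemma:PFij} was stated for $1\le i\le j$. This is harmless: tenability forces $\tau_0\ge s\ge1>0$, and $\tau_\ell=b\ell+\tau_0$ is strictly increasing, so every $\tau_\ell$ is positive and every factor is well defined and nonzero, including the case $i=0$; when $i=j$ the empty product is $\matI$, so $\mathbf{P}_{\lambda_1}\mathbf{F}_{j,j}=\mathbf{P}_{\lambda_1}$, consistent with $f_{j,j}(\lambda_1)=1$.
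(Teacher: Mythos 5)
Your proof is correct and is exactly the intended argument: the paper itself omits the proof of this lemma (deferring to Janson 2020), and your route --- simplicity of $\lambda_1$ forces $\nu_{\lambda_1}=0$, hence $\mathbf{N}_{\lambda_1}=\mathbf{0}$ and $\mathbf{P}_{\lambda_1}\matA=\lambda_1\mathbf{P}_{\lambda_1}$, so the projection absorbs each factor as the scalar $1+\lambda_1/\tau_\ell$ and the product telescopes via $1+b/\tau_\ell=\tau_{\ell+1}/\tau_\ell$ --- is precisely the standard one. The only cosmetic remark is that idempotency of $\mathbf{P}_{\lambda_1}$ plays no real role in the induction; the commutation and eigenrelation alone suffice.
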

\begin{lemma}
\label{Lemma:FxA}
For any fixed $x \in (0,1]$, as $n\to \infty$, $\mathbf{F}_{\lceil xn \rceil, n}\to x^{-\frac{1}{  b}\mathbf{A}}$.
\end{lemma}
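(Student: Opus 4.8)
The plan is to recognize $\mathbf{F}_{\lceil xn\rceil, n}$ as a product of the form $\prod_{\lceil xn\rceil \le \ell < n}(\mathbf{I} + \tau_\ell^{-1}\mathbf{A})$ with $\tau_\ell = b\ell + \tau_0$, take (matrix) logarithms, and show the resulting sum converges to $-\frac1b (\ln x)\,\mathbf{A}$, so that the product converges to $\exp\!\big(-\frac1b(\ln x)\mathbf{A}\big) = x^{-\mathbf{A}/b}$. First I would recall that the matrices $\mathbf{I} + \tau_\ell^{-1}\mathbf{A}$ commute with one another (they are all polynomials in the fixed matrix $\mathbf{A}$), so there is no ordering subtlety and we may freely manipulate the product factorwise. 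For $\ell$ large enough that $\|\tau_\ell^{-1}\mathbf{A}\| < 1$, the principal matrix logarithm $\log(\mathbf{I} + \tau_\ell^{-1}\mathbf{A})$ is well defined and equals $\tau_\ell^{-1}\mathbf{A} + \mathbf{R}_\ell$, where the remainder satisfies $\|\mathbf{R}_\ell\| \le C\|\mathbf{A}\|^2\tau_\ell^{-2} = \mathbf{O}(\ell^{-2})$ uniformly. Since $\lceil xn\rceil \to \infty$ as $n\to\infty$ for fixed $x \in (0,1]$, all factors in the product are eventually in this regime, and the total contribution of the remainders is $\sum_{\ell \ge \lceil xn\rceil} \mathbf{O}(\ell^{-2}) = \mathbf{o}(1)$.

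The main term is $\sum_{\lceil xn\rceil \le \ell < n} \tau_\ell^{-1}\mathbf{A} = \mathbf{A}\sum_{\lceil xn\rceil \le \ell < n} \frac{1}{b\ell + \tau_0}$. A standard comparison of this partial harmonic-type sum with $\int_{\lceil xn\rceil}^{n} \frac{dt}{bt+\tau_0} = \frac1b\big(\ln(bn+\tau_0) - \ln(b\lceil xn\rceil + \tau_0)\big)$ shows the sum equals $\frac1b\ln\frac{bn+\tau_0}{b\lceil xn\rceil+\tau_0} + \mathbf{o}(1)$, and since $\frac{bn+\tau_0}{b\lceil xn\rceil + \tau_0} \to \frac1x$ as $n\to\infty$, we obtain $\sum_{\lceil xn\rceil \le \ell < n}\tau_\ell^{-1}\mathbf{A} \to \frac1b(\ln\frac1x)\mathbf{A} = -\frac1b(\ln x)\mathbf{A}$. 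Alternatively — and this is perhaps cleaner given the tools already in the paper — one can bypass logarithms entirely by invoking the scalar identity~(\ref{Eq:polynomial}): $f_{\lceil xn\rceil, n}(z) = \frac{\Gamma(n + \frac1b(\tau_0+z))}{\Gamma(n+\frac{\tau_0}{b})}\cdot\frac{\Gamma(\lceil xn\rceil + \frac{\tau_0}{b})}{\Gamma(\lceil xn\rceil + \frac1b(\tau_0+z))}$, and Stirling's ratio asymptotics $\Gamma(N+a)/\Gamma(N+c) \sim N^{a-c}$ give $f_{\lceil xn\rceil,n}(z) \to n^{z/b}/(xn)^{z/b} = x^{-z/b}$ for each fixed $z$. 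Since $\mathbf{F}_{\lceil xn\rceil,n}$ is obtained from the polynomial $f_{\lceil xn\rceil,n}$ by substituting $\mathbf{A}$ for $z$, and since the coefficients of these polynomials converge (this is where one must be slightly careful: convergence of a sequence of polynomials evaluated at a fixed argument $z$, over enough values of $z$ or via the bounded-degree structure, forces coefficientwise convergence), the matrix product converges to the matrix $x^{-\mathbf{A}/b}$, interpreted via the Jordan/holomorphic functional calculus for $\mathbf{A}$.

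I expect the main obstacle to be the passage from scalar asymptotics to the matrix statement with full rigor: $f_{i,j}$ is a polynomial of degree growing with $j-i$, so one cannot simply say "the coefficients converge" without an argument. The clean fix is to work with the logarithm approach of the first paragraph, where commutativity of the factors and the operator-norm bound on the remainder make the matrix convergence immediate and uniform, and the only genuinely analytic input is the elementary estimate $\sum_{\lceil xn\rceil\le \ell<n}(b\ell+\tau_0)^{-1} = -\frac1b\ln x + o(1)$. Everything else — commutativity, the logarithm expansion, summing an $\mathbf{O}(\ell^{-2})$ tail, and exponentiating back using continuity of $\exp$ on matrices — is routine. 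A final remark: one should note $x^{-\mathbf{A}/b} := \exp(-\frac1b(\ln x)\mathbf{A})$ is exactly the standard definition used elsewhere in the paper, so the two formulations agree.
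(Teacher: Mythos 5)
Your proposal is correct, and it is worth noting that the paper itself gives no proof of this lemma: it defers to Janson (2020), where the argument runs through the functional-calculus representation of $f_{i,j}(\mathbf{A})$ --- i.e.\ $f_{i,j}(\mathbf{A})=\sum_{\lambda}\sum_{m\le\nu_\lambda}\frac{f_{i,j}^{(m)}(\lambda)}{m!}\mathbf{P}_\lambda\mathbf{N}_\lambda^m$ --- combined with Gamma-ratio asymptotics for $f_{i,j}^{(m)}(\lambda)$, essentially a rigorous version of your second, ``alternative'' route. Your primary route via matrix logarithms is genuinely different and entirely self-contained: commutativity of the factors (all power series in $\mathbf{A}$) lets you write the product as $\exp\bigl(\sum_\ell \log(\mathbf{I}+\tau_\ell^{-1}\mathbf{A})\bigr)$, the $\mathbf{O}(\ell^{-2})$ remainders sum to $\mathbf{o}(1)$ over $\ell\ge\lceil xn\rceil$, and the harmonic-type sum $\sum_{\lceil xn\rceil\le\ell<n}(b\ell+\tau_0)^{-1}\to-\frac1b\ln x$ does the rest; continuity of $\exp$ finishes it. What the logarithm approach buys is exactly what you identified: it sidesteps the illegitimate ``coefficientwise convergence'' step (pointwise convergence of polynomials of unbounded degree does not by itself control their value at a matrix argument), whereas Janson's approach buys more --- it yields the finer quantitative expansions with the nilpotent corrections that are needed for Lemma~\ref{Lemma:PFij}, for which your soft argument would not suffice. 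Your diagnosis of the obstacle in the Gamma-function route is accurate, though the standard repair is not coefficientwise convergence but convergence of the finitely many spectral data $f_{i,j}^{(m)}(\lambda)$ for $m\le\nu_\lambda$, which also follows from Stirling. Either way, for the statement as posed your argument is complete.
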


\subsection{Asymptotic mean}
While exact, the execution of the calculation in Corollary~\ref{Cor:exactmean} may be quite cumbersome.  
Here, we present a shortcut to the asymptotics of this computation via a multivariate martingale. 
\begin{theorem}
\label{Prop:F0P}
Suppose an affine $(k, s,b)$-urn scheme is built from an irreducible core $\matA$ with
principal eigenvector~$\vecv_1$ and core index $\Lambda\le 1$. Then, 
the expected value of the composition vector is 
$$\E[\mathbf{X}_n] = \lambda_1 n \vecv_1 + \tau_0 \vecv_1 +\bfO \big(n^\Lambda
      \ln ^{\nu_2}(n)\big).$$
\end{theorem}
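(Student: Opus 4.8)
The plan is to start from the exact product formula for the mean in Corollary~\ref{Cor:exactmean}, namely $\bm{\mu}_n = \vecX_0 \, \mathbf{F}_{0,n}$, and decompose $\mathbf{F}_{0,n}$ along the spectral projections $\mathbf{P}_\lambda$ of $\matA$. Using $\sum_{\lambda\in\sigma(\matA)}\mathbf{P}_\lambda = \matI$ and the fact that each $\mathbf{P}_\lambda$ commutes with $\matA$ (hence with every factor $\matI + \frac1{\tau_\ell}\matA$ and therefore with $\mathbf{F}_{0,n}$), I would write
\begin{align*}
\bm{\mu}_n = \vecX_0\,\mathbf{F}_{0,n} = \sum_{\lambda\in\sigma(\matA)} \vecX_0\,\mathbf{P}_\lambda\,\mathbf{F}_{0,n}.
\end{align*}
The principal term $\lambda=\lambda_1=b$ is handled exactly by the second displayed lemma: since $\tau_0\ge s\ge 1$ so that $0+\tfrac{\tau_0}{b}>0$ makes the ratio well-defined, $\mathbf{P}_{\lambda_1}\mathbf{F}_{0,n} = \frac{n+\tau_0/b}{\,\tau_0/b\,}\,\mathbf{P}_{\lambda_1} = \big(\tfrac{bn}{\tau_0}+1\big)\mathbf{P}_{\lambda_1}$, and combined with \eqref{EQ:P1v} this gives $\vecX_0\,\mathbf{P}_{\lambda_1}\mathbf{F}_{0,n} = \big(\tfrac{bn}{\tau_0}+1\big)(\vecX_0\mathbf{1}^T)\vecv_1 = (bn+\tau_0)\vecv_1$, using $\vecX_0\mathbf{1}^T = \tau_0$ and $\lambda_1=b$. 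This is precisely the stated leading term $\lambda_1 n\vecv_1 + \tau_0\vecv_1$.

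For every other eigenvalue $\lambda$ (those with $\Re(\lambda)\le\Re(\lambda_2)$, so $\Re(\lambda)/b\le\Lambda$), I would invoke Lemma~\ref{Lemma:PFij} with $i=1$ (or a fixed small constant, after absorbing the finitely many initial factors $\matI+\frac1{\tau_\ell}\matA$ with $\ell<1$ into a constant matrix, which is harmless) and $j=n$: this yields $\mathbf{P}_\lambda\mathbf{F}_{1,n} = \bfO\big(n^{\Re(\lambda)/b}(1+\ln n)^{\nu_\lambda}\big) = \bfO\big(n^\Lambda \ln^{\nu_\lambda}(n)\big)$. Since there are only finitely many eigenvalues, summing these error matrices and multiplying by the fixed vector $\vecX_0$ gives a total contribution of order $\bfO\big(n^\Lambda \ln^{\nu_2}(n)\big)$, where $\nu_2$ is the largest $\nu_\lambda$ among eigenvalues attaining $\Re(\lambda)=\Re(\lambda_2)$ (those with strictly smaller real part are of strictly smaller polynomial order and absorbed). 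Adding the principal term and the error term yields the claim. One should note the hypothesis $\Lambda\le 1$ merely ensures the error is genuinely $o(n)$, so the decomposition into leading-plus-remainder is meaningful; the argument itself does not otherwise need $\Lambda<1$.

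The main technical point — not really an obstacle but the step requiring care — is the bookkeeping at the boundary index $i=0$: the lemmas are stated for $i\ge 1$, and $f_{0,n}$ involves $\Gamma(0+\tau_0/b)$, which is finite precisely because $\tau_0\ge s\ge 1>0$, so no singularity arises; still, to apply Lemma~\ref{Lemma:PFij} cleanly one writes $\mathbf{F}_{0,n} = \mathbf{F}_{0,1}\mathbf{F}_{1,n}$ (here $\mathbf{F}_{0,1} = \matI + \frac1{\tau_0}\matA$ is a fixed matrix), pushes $\mathbf{P}_\lambda$ through, and applies the lemma to $\mathbf{F}_{1,n}$. The only other thing to verify is that the subdominant eigenvalues with $\Re(\lambda)<\Re(\lambda_2)$ contribute $o(n^\Lambda)$ rather than competing with the stated bound — immediate from $n^{\Re(\lambda)/b}\ln^{\nu_\lambda}(n) = o(n^\Lambda)$ — and that $\vecv_1$ is normalized consistently with \eqref{EQ:P1v}, i.e. $\mathbf{w}_1\vecv_1^T = \mathbf{1}\vecv_1^T = 1$, which is exactly the normalization fixed in the paragraph preceding Lemma~\ref{Lemma:PFij}. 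Assembling these pieces completes the proof.
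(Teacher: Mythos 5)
Your proposal is correct and follows essentially the same route as the paper: both start from $\bm{\mu}_n=\vecX_0\,\mathbf{F}_{0,n}$, split along the spectral projections, evaluate the $\lambda_1$-term exactly via (\ref{EQ:SimpleFP1}) and (\ref{EQ:P1v}) to get $(bn+\tau_0)\vecv_1$, and bound the remaining terms by Lemma~\ref{Lemma:PFij}. Your extra care at the boundary index $i=0$ (writing $\mathbf{F}_{0,n}=\mathbf{F}_{0,1}\mathbf{F}_{1,n}$) is a detail the paper leaves implicit but does not change the argument.
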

\begin{proof}
The linear vectorial recurrence~(\ref{Eq:linearstochastic}) 
is of the general form
$${\bf L}_n = {\bf G_n} {\bf L}_{n-1} + {\bf H}_n,$$
with solution\footnote{Interpret the product corresponding to $i=n$ as the identity
matrix $\matI$.}
$${ \bf L}_n = \sum_{i=1}^n {\bf H}_i\prod_{j=i+1}^n {\bf G}_j
           + {\bf L}_0 \prod_{j=1}^n {\bf G}_j.$$

Write  the recurrence in the form $\vecX_n = \mathbf{F}_{n-1, n}+ \mathbf{Y}_n$, to
recognize the solution
\begin{equation}
\label{EQ:XF}
\mathbf{X}_n=\mathbf{X}_{0}\, \mathbf{F}_{0,n}+\sum_{i=1}^{n}\mathbf{Y}_i\, \mathbf{F}_{i,n}.
\end{equation}
The factors $\mathbf{X}_0$ and $\mathbf{F}_{i,j}$ are non-random.  We have
$\E\big[\mathbf{Y}_i\big]=0$, and by taking expectations we get
\begin{equation}
\label{EQ:XFMean}
\bm{\mu}_n=\E\big[\mathbf{X}_n\big]= \mathbf{X}_0\, \mathbf{F}_{0,n} = \mathbf{X}_0\, 
\, \matI \, \mathbf{F}_{0,n} 
= \sum_{\lambda \in \sigma(\matA)} \mathbf{X}_0\,  \mathbf{P}_\lambda\, \mathbf{F}_{0,n}. 
\end{equation}
Janson~\cite{Janson2020} analyzes the function $\mathbf{F}_{0,n}$ carefully.
When $\lambda \ne \lambda_1$, Lemma 4.3 in~\cite{Janson2020} implies that 
\begin{align*}
\mathbf{X}_0 \, \mathbf{P}_\lambda \, \mathbf{F}_{0,n} = \bfO\big(n^{\Re(\lambda)/b} \ln^{\nu_\lambda}(n)\big) = \bfO\big(n^{\Re(\lambda_2)/b} \ln^{\nu_2}(n)\big).
\end{align*}
By (\ref{EQ:P1v}), we get $\mathbf{X}_0\, \mathbf{P}_{\lambda_1}  
        = \mathbf{X}_0\, \mathbf{1}^T\, \mathbf{v}_1 = \tau_0\, \mathbf{v}_1$, and {so by (\ref{EQ:SimpleFP1})}, we have
\begin{align*}
 \mathbf{X}_0\, \mathbf{P}_{\lambda_1} \, \mathbf{F}_{0,n} = \frac{n+\frac{\tau_0}{b}}{\frac{\tau_0}{b}}  \mathbf{X}_0\, \mathbf{P}_{\lambda_1}=\frac{b n+\tau_0}{\tau_0}\, (\tau_0 \mathbf{v}_1) = (b n+\tau_0)\mathbf{v}_1.
\end{align*}
Therefore, we have 
$$\E [\mathbf{X}_n] =  (\lambda_1 n+\tau_0) \, \mathbf{v}_1+ \bfO \big(n^{\Lambda} \ln^{\nu_2}(n) \big) .$$
\end{proof}
\subsection{The covariance matrix}
For the covariance matrix, we define 
\begin{align*}
\hat{\mathbf{P}}  = \sum_{\lambda \ne \lambda_1}\mathbf{P}_{\lambda}= \mathbf{I} -\mathbf{P}_{\lambda_1}
\end{align*}
and the symmetric matrix 
\begin{equation}
\mathbf{B}:= \frac 1 {s^2}\, \mathbf{A}^T  \bm{\mathcal{Q}}\mathbf{A},
\label{EQ: BSym3}
\end{equation}
where $ \bm{\mathcal{Q}}= s(s-1) \mathbf{v}_1^T\mathbf{v}_1+s\, \mathbf{diag}(\mathbf{v}_1)$.
Lastly, we use $e^{{s} \mathbf{A}} := \sum_{k=0}^{\infty} \frac{1}{k!}\big({s}\mathbf{A}\big)^k $ for all $s\in \mathbb{R}$, to define the integral
\begin{equation}
\label{EQ:SigAInt3}
\mathbf{\Sigma}_{(\mathbf{A})}:=b \int_0^\infty e^{{s} \mathbf{A}^T}\,\hat{\mathbf{P}}^T\, 
\mathbf{B}\, \hat{\mathbf{P}}\,e^{{s} \mathbf{A}} e^{-{b} s}\, ds.
\end{equation}
\begin{theorem}
\label{Thm: CovAsymp}
Consider a growing irreducible affine $(k,s,b)$-urn. Then, for both unordered samples with and without replacement, and for large $n$, we have:

\begin{enumerate}
\item [\textnormal{(1)}] If the core index is small, then $\frac{1}{n}\bm{\Sigma}_n  \to
\bm{\Sigma}_{(\mathbf{A})}$, where $\bm{\Sigma}_{(\mathbf{A})}$ is a limiting matrix defined in (\ref{EQ:SigAInt3}).
\item [\textnormal{(2)}] If the core index is critical, then 
\begin{align*}
\frac 1 {n \ln^{2\nu_2+1}(n)}\, \bm{\Sigma}_n  \to  
\frac1{{b}^{2\nu_2}(2\nu_2+1)(\nu_2!)^2}
\sum_{\Re(\lambda) = {b}/2}(\mathbf{N}^*_\lambda)^{\nu_2}\,  \mathbf{P}^*_\lambda  \, \mathbf{B}\, \mathbf{P}_\lambda \, \mathbf{N}^{\nu_2}_\lambda.
\nonumber
\end{align*}
\end{enumerate}
\end{theorem}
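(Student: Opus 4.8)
The plan is to work from the explicit martingale representation~(\ref{EQ:XF}), namely $\mathbf{X}_n = \mathbf{X}_0\,\mathbf{F}_{0,n} + \sum_{i=1}^n \mathbf{Y}_i\,\mathbf{F}_{i,n}$, and compute $\bm{\Sigma}_n = \Cov[\mathbf{X}_n]$. Because the $\mathbf{Y}_i$ are martingale differences (by~(\ref{EQ:YZero})), the cross terms vanish after taking expectations, and the covariance of the first (deterministic) term contributes nothing, so
\begin{align*}
\bm{\Sigma}_n = \sum_{i=1}^n \mathbf{F}_{i,n}^T\, \E\big[\mathbf{Y}_i^T \mathbf{Y}_i\big]\, \mathbf{F}_{i,n}.
\end{align*}
The first substantive step is to identify $\E[\mathbf{Y}_i^T \mathbf{Y}_i]$. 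Since $\mathbf{Y}_i = \tfrac1s \mathbf{Q}_i \mathbf{A} - \tfrac1{\tau_{i-1}}\mathbf{X}_{i-1}\mathbf{A}$, its conditional second-moment matrix equals $\tfrac1{s^2}\mathbf{A}^T\,\Cov[\mathbf{Q}_i \mid \mathbb{F}_{i-1}]\,\mathbf{A}$, and the conditional covariance of the multivariate hypergeometric vector $\mathbf{Q}_i$ is a known quantity proportional to $s(s-1)$ times a rank-one piece plus $s$ times a diagonal piece, both built from the normalized composition $\mathbf{X}_{i-1}/\tau_{i-1}$. Using Theorem~\ref{Prop:F0P}, $\mathbf{X}_{i-1}/\tau_{i-1} \to \mathbf{v}_1$ almost surely (and in $\mathcal{L}_2$), so $\E[\mathbf{Y}_i^T\mathbf{Y}_i] \to \mathbf{B}$ as defined in~(\ref{EQ: BSym3}), with an error term controlled by the rate in Theorem~\ref{Prop:F0P}.

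The next step is asymptotic evaluation of the sum $\sum_{i=1}^n \mathbf{F}_{i,n}^T\, \mathbf{B}_i\, \mathbf{F}_{i,n}$, where $\mathbf{B}_i = \E[\mathbf{Y}_i^T\mathbf{Y}_i] \approx \mathbf{B}$. Here the decomposition $\matI = \sum_\lambda \mathbf{P}_\lambda$ enters: we insert it on both sides and separate the $\lambda_1$-contribution from the rest. The $\mathbf{P}_{\lambda_1}$-part is annihilated by a key cancellation — $\mathbf{B}$ satisfies $\mathbf{P}_{\lambda_1}\mathbf{B} = \mathbf{0}$ (equivalently $\mathbf{v}_1 \mathbf{A} = b\mathbf{v}_1$ combined with row-balance makes $\mathbf{1}^T \mathbf{v}_1 \mathbf{A}$ collapse), which is exactly why $\hat{\mathbf{P}}$ appears in~(\ref{EQ:SigAInt3}) and why only the $\lambda \ne \lambda_1$ eigenvalues drive the fluctuations. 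For the surviving terms one substitutes the asymptotics of $\mathbf{P}_\lambda \mathbf{F}_{i,n}$ from Lemma~\ref{Lemma:PFij}: each block behaves like $(n/i)^{\lambda/b}(\ln(n/i))^{\nu_\lambda}/\nu_\lambda!$ times $\mathbf{P}_\lambda\mathbf{N}_\lambda^{\nu_\lambda}$, up to lower order. Changing variables $i = \lceil xn\rceil$, $x\in(0,1]$, turns the sum into a Riemann sum; writing $u = -\tfrac1b\ln x$ so $x^{-\lambda/b} = e^{\lambda u}$ recasts the limit as an integral over $u\in(0,\infty)$ — this is the genesis of the integral form~(\ref{EQ:SigAInt3}) in the small-index case, where the dominant scaling is $n^1$ (since $2\Re(\lambda)/b < 1$ makes $\int_0^1 x^{2\Re(\lambda)/b - 1}\,dx$ converge and the overall power of $n$ from the sum is $1$). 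In the critical case $\Re(\lambda) = b/2$ the same integral diverges logarithmically: the relevant terms are those $\lambda$ with $\Re(\lambda) = b/2$ and maximal nilpotency index $\nu_2$, and the sum $\sum_i (n/i)^{2\Re(\lambda)/b}(\ln(n/i))^{2\nu_2}/i \cdot$(bounded) over $i \asymp n$ produces the extra $\ln^{2\nu_2+1}(n)$ factor, with constant $\int_0^1 (-\ln x)^{2\nu_2}\,dx / (\nu_2!)^2 b^{2\nu_2}= 1/((2\nu_2+1)(\nu_2!)^2 b^{2\nu_2})$, matching the stated limit; the off-diagonal (distinct $\lambda$) and lower-$\nu$ contributions are then shown to be of smaller order.

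The main obstacle I expect is twofold. First, making the replacement $\mathbf{B}_i \rightsquigarrow \mathbf{B}$ uniform enough across the whole range $1 \le i \le n$: for small $i$ the convergence $\mathbf{X}_{i-1}/\tau_{i-1}\to\mathbf{v}_1$ has not kicked in, so one must argue that these early terms, even with a crude bound on $\mathbf{B}_i$, contribute only $\bfo$ of the main term after being multiplied by $\mathbf{F}_{i,n}^T(\cdot)\mathbf{F}_{i,n}$ — this is where the $\bfO$-part of Lemma~\ref{Lemma:PFij} and a careful split of the sum into $i \le \epsilon n$ and $i > \epsilon n$ is needed, followed by letting $\epsilon\to 0$. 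Second, bookkeeping the interaction of the Jordan/nilpotent structure with the matrix products: one has to verify that $\mathbf{P}_\lambda^* \mathbf{B}\,\mathbf{P}_\mu = \mathbf{0}$ unless suitable compatibility holds and that only the top-nilpotency terms survive in the critical regime, which requires tracking powers of $\ln(n/i)$ against powers of $\mathbf{N}_\lambda$ through the Riemann-sum passage. Dominated/uniform convergence to justify interchanging the limit with the integral in~(\ref{EQ:SigAInt3}) (using the exponential decay $e^{-bs}$ against the polynomial-times-$e^{s\mathbf{A}}$ growth, valid precisely because $\hat{\mathbf{P}}$ kills the $\lambda_1$-direction and $2\Re(\lambda) < b$ in the small-index case) is the final technical point, but it is routine once the Riemann-sum structure is in place.
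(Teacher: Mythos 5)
Your proposal follows essentially the same route as the paper: the martingale-difference orthogonality reducing $\bm{\Sigma}_n$ to $\sum_{i=1}^n\mathbf{F}_{i,n}^T\,\E[\mathbf{Y}_i^T\mathbf{Y}_i]\,\mathbf{F}_{i,n}$, the annihilation of the $\lambda_1$-projection via balance ($\mathbf{Y}_n\mathbf{1}^T=0$), the Riemann-sum-to-integral passage with dominated convergence and the change of variables $x=e^{-bs}$ in the small-index case, and the logarithmic substitution isolating the pairs with $\Re(\lambda)=\Re(\varrho)=b/2$ and $\lambda=\bar{\varrho}$ in the critical case. The only slips are cosmetic and do not affect the argument: $\E[\mathbf{Y}_i^T\mathbf{Y}_i]$ actually tends to $\mathbf{B}-b^2\mathbf{v}_1^T\mathbf{v}_1$ rather than $\mathbf{B}$ (the extra term is killed by $\mathbf{P}_\lambda$ for $\lambda\ne\lambda_1$, which is also why $\hat{\mathbf{P}}$, not an identity $\mathbf{P}_{\lambda_1}\mathbf{B}=\mathbf{0}$, appears in the limit), and the critical-case constant $1/(2\nu_2+1)$ arises from $\int_0^1 u^{2\nu_2}\,du$ after normalizing $\ln(n/i)$ by $\ln n$ (the paper's $i=\lceil n^y\rceil$ substitution), not from $\int_0^1(-\ln x)^{2\nu_2}\,dx$, which equals $(2\nu_2)!$.
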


Before we prove the theorems for the covariance, we provide some additional scaffolding. Using (\ref{EQ:XFMean}), we rewrite (\ref{EQ:XF}) as 
\begin{equation}
\label{EQ:DiffXMean}
\mathbf{X}_n -  \bm{\mu}_n = \sum_{i=1}^{n}  \mathbf{Y}_i \, \mathbf{F}_{i,n}.
\end{equation}
Thus, the covariance can be computed as 
\begin{align}
\Cov\big[\mathbf{X}_n] :=&\, \E \big[(\mathbf{X}_n-  \bm{\mu}_n)^T(\mathbf{X}_n-  \bm{\mu}_n)\big] \nonumber\\[1 em]
&= \E \Big[\sum_{i=1}^n \sum_{j=1}^n (\mathbf{Y}_i \mathbf{F}_{i,n} )^T ( \mathbf{Y}_j \mathbf{F}_{j,n}) \Big] \nonumber\\[1 em]
&= \sum_{i=1}^n \sum_{j=1}^n \mathbf{F}_{i,n}^T \,\E \big[\mathbf{Y}_i^T \mathbf{Y}_j \big] \mathbf{F}_{j,n}.
\label{EQ:VarSumIJ}
\end{align}
Since $\mathbf{Y}_j$ is {$\mathbb{F}_{j-1}$}-measurable, and
$\E\big[\mathbf{Y}_i \,\big|\, \mathbb{F}_j \big]=\mathbf{0}$,
when $i>j$,  we get 
\begin{align*}
\E\big[\mathbf{Y}_i^T\, \mathbf{Y}_j\big]=\E\big[\E\big[\mathbf{Y}_i^T \, \big|\, \mathbb{F}_j\big]\,\mathbf{Y}_j\big]= \mathbf{0}.
\end{align*}
In similar fashion, we have $\E\big[\mathbf{Y}_i^T\, \mathbf{Y}_j\big]= \mathbf{0}$ when $i<j$, and so (\ref{EQ:VarSumIJ}) reduces to
\begin{equation}
\label{EQ:VarFY}
\Cov\big[\mathbf{X}_n] = \sum_{i=1}^n \mathbf{F}_{i,n}^T \,\E \big[\mathbf{Y}_i^T\,
 \mathbf{Y}_i \big] \mathbf{F}_{i,n}.
\end{equation}
Since the urn is balanced, $\nabla \mathbf{X}_n \, \mathbf{1}^T=  {b}$ for all $n\ge1$. Thus, we have
\begin{align*}
\mathbf{Y}_n \mathbf{1}^T =\nabla \mathbf{X}_n \, \mathbf{1}^T - \E\big[\nabla \mathbf{X}_n \mathbf{1}^T\, \big| \, \mathbb{F}_{n-1} \big] =  {b}- {b}=0,
\end{align*}
which implies that $\mathbf{Y}_n \, \mathbf{P}_{\lambda_1}=\mathbf{0}$. Thus, we can rewrite (\ref{EQ:VarFY}) as
\begin{equation}
\label{EQ:VarFPY}
\Cov\big[\mathbf{X}_n\big]=\sum_{\lambda\in \sigma(\matA)} \sum_{ \varrho\in \sigma(\matA)} \sum_{i=1}^{n} \mathbf{F}_{i,n}^{T}\, \mathbf{P}_{\lambda}^T \,\E\big[\mathbf{Y}_{i}^T\, \mathbf{Y}_i \big]\, \mathbf{P}_\varrho\, \mathbf{F}_{i,n}.
\end{equation}
We define $\mathbf{T}_{i,n,\lambda,  \varrho}:=\mathbf{F}_{i,n}^{T} \, \mathbf{P}_{\lambda}^T \, \E\big[\mathbf{Y}_{i}^T\, \mathbf{Y}_i \big]\, \mathbf{P}_ \varrho \mathbf{F}_{i,n}$ for convenience. When either~$\lambda$ or $ \varrho$ is equal to $\lambda_1$, $\mathbf{P}_{\lambda}^T\, \E\big[\mathbf{Y}_{i}^T\, \mathbf{Y}_i \big]\, \mathbf{P}_ \varrho = \mathbf{0}$, 
{and so we may drop those cases and rewrite (\ref{EQ:VarFPY}) as}
\begin{equation}
\label{EQ:VarT}
\Cov\big[\mathbf{X}_n\big]=\sum_{\lambda \ne \lambda_1} \sum_{ \varrho \ne \lambda_1} \sum_{i=1}^{n} \mathbf{T}_{i,n,\lambda,  \varrho}.
\end{equation}
From this result, we produce three lemmas, the first of which gives us the general asymptotic growth of the covariances, the second demonstrating convergence in $\mathcal{L}_2$ and asymptotic approximation in $\mathcal{L}_1$, and the last providing us with the development of the $\mathbf{B}$ matrix described in (\ref{EQ: BSym3}).
\begin{lemma}
\label{VarCases}
If $\lambda_1$ is a simple eigenvalue, then for $n\ge 2$, {we have}
\begin{align}
\Cov \big[\mathbf{X}_n\big]=
\begin{cases}
 \mathbf{{O}}(n),& \mbox{for\ } \Lambda < \nicefrac 1 2;\\
 \mathbf{{O}}(n \ln^{2\nu_2+1}(n)),& \mbox{for\ } \Lambda = \nicefrac 1 2;\\
 \mathbf{{O}}(n^{2\Lambda} \ln^{2\nu_2}(n)), & \mbox{for\ }\Lambda > \nicefrac 1 2.
\end{cases}
\label{cov_big_oh}
\end{align}
Consequently, we have  $\Cov [\mathbf{X}_n]=\mathbf{o}(n^2)$, whenever
$\lambda_2 < \lambda_1$.
\end{lemma}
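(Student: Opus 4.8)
The plan is to estimate the triple sum in~(\ref{EQ:VarT}) pair by pair, feeding Lemma~\ref{Lemma:PFij} a uniform bound on the martingale increments and then comparing the resulting $i$-sums with integrals. The first ingredient is that $\E[\mathbf{Y}_i^T\mathbf{Y}_i]=\mathbf{O}(1)$ uniformly in $i$. Indeed, $\mathbf{Y}_i = \frac1s\mathbf{Q}_i\mathbf{A} - \frac1{\tau_{i-1}}\mathbf{X}_{i-1}\mathbf{A}$; the first summand has entries bounded by $s\max_{j,\ell}|a_{j\ell}|$ because every component of $\mathbf{Q}_i$ lies in $\{0,1,\dots,s\}$, while the second is a convex combination of the rows of $\mathbf{A}$, since $\mathbf{X}_{i-1}/\tau_{i-1}$ is a probability vector (tenability keeps all counts nonnegative, and they sum to $\tau_{i-1}$). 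Hence every entry of $\mathbf{Y}_i$, and a fortiori of $\mathbf{Y}_i^T\mathbf{Y}_i$, is bounded by a constant depending only on $\mathbf{A}$ and $s$.

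Next, for $\lambda,\varrho\ne\lambda_1$, Lemma~\ref{Lemma:PFij} gives $\mathbf{P}_\lambda\mathbf{F}_{i,n}=\mathbf{O}\bigl((n/i)^{\Re(\lambda)/b}(1+\ln(n/i))^{\nu_\lambda}\bigr)$, and the same estimate for the transpose $\mathbf{F}_{i,n}^T\mathbf{P}_\lambda^T$. Since $k$ is fixed, the product of the three $\mathbf{O}(\cdot)$ factors constituting $\mathbf{T}_{i,n,\lambda,\varrho}$ satisfies
$$\mathbf{T}_{i,n,\lambda,\varrho}=\mathbf{O}\Bigl((n/i)^{(\Re(\lambda)+\Re(\varrho))/b}\,\bigl(1+\ln(n/i)\bigr)^{\nu_\lambda+\nu_\varrho}\Bigr).$$
Writing $\alpha:=(\Re(\lambda)+\Re(\varrho))/b$ and $\nu:=\nu_\lambda+\nu_\varrho$, a comparison of $\sum_{i=1}^n(n/i)^\alpha(1+\ln(n/i))^\nu$ with $\int_1^n(n/x)^\alpha(1+\ln(n/x))^\nu\,dx$ (substitute $x=n/u$) yields $\mathbf{O}(n)$ when $\alpha<1$, $\mathbf{O}(n\ln^{\nu+1}n)$ when $\alpha=1$, and $\mathbf{O}(n^\alpha\ln^\nu n)$ when $\alpha>1$.

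Finally I would sum over the finitely many pairs $(\lambda,\varrho)$. Irreducibility forces $\Re(\lambda),\Re(\varrho)\le\Re(\lambda_2)=b\Lambda$, with $\nu_\lambda,\nu_\varrho\le\nu_2$ whenever the real part equals $\Re(\lambda_2)$, so the dominant contribution comes from pairs with $\Re(\lambda)=\Re(\varrho)=b\Lambda$ of maximal Jordan index. If $\Lambda<\tfrac12$ then $\alpha\le2\Lambda<1$ for every pair, giving $\mathbf{O}(n)$; if $\Lambda=\tfrac12$ the extremal pairs have $\alpha=1$ and $\nu=2\nu_2$, giving $\mathbf{O}(n\ln^{2\nu_2+1}n)$; if $\Lambda>\tfrac12$ they have $\alpha=2\Lambda>1$ and $\nu=2\nu_2$, giving $\mathbf{O}(n^{2\Lambda}\ln^{2\nu_2}n)$. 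This is~(\ref{cov_big_oh}), and the concluding assertion is immediate: $\lambda_2<\lambda_1$ means $\Lambda<1$, whence $2\Lambda<2$ and each of $n$, $n\ln^{2\nu_2+1}n$, $n^{2\Lambda}\ln^{2\nu_2}n$ is $\mathbf{o}(n^2)$.

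The routine part is the bookkeeping over pairs. The step needing care is the critical regime, where one must verify that the logarithmic exponents $\nu_\lambda$ supplied by Lemma~\ref{Lemma:PFij} add to exactly $2\nu_2$ and that the $i$-summation then contributes precisely one additional logarithm (producing $\ln^{2\nu_2+1}n$, not $\ln^{2\nu_2}n$), while simultaneously checking that pairs with one eigenvalue off the critical line ($\alpha<1$), or with strictly smaller Jordan index, are genuinely of lower order. Establishing the uniform $\mathbf{O}(1)$ bound on the increments cleanly, including the nonnegativity of the composition vector under tenability, is the other small but essential ingredient.
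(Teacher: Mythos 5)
Your proposal is correct and follows essentially the same route as the paper: a uniform $\mathbf{O}(1)$ bound on $\E[\mathbf{Y}_i^T\mathbf{Y}_i]$, the estimate of Lemma~\ref{Lemma:PFij} applied to each $\mathbf{T}_{i,n,\lambda,\varrho}$, an integral comparison for the $i$-sum in the three regimes, and bookkeeping over the finitely many eigenvalue pairs with the extremal pair $\lambda=\varrho=\lambda_2$ dominating. The only cosmetic differences are that you bound the increments directly from $\mathbf{Q}_i\le s\mathbf{1}$ rather than citing the multihypergeometric second moments, and in the subcritical case you keep the logarithmic factor inside the integral rather than absorbing it into a slightly larger exponent $\alpha<1$ as the paper does.
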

\begin{proof}
By construction, $\mathbf{Y}_n=\big(\frac{1}{s} \mathbf{Q}_n - \frac{1}{\tau_{n-1}}\mathbf{X}_{n-1}\big)\mathbf{A}$, and so 
\begin{align}
\label{EQ:Y2Q}
\E \big[\mathbf{Y}_n^T\mathbf{Y}_n\big]=\frac{1}{s^2}\, \mathbf{A}^T \E \big[\mathbf{Q}^T_n \mathbf{Q}_n\big] \mathbf{A} =  \mathbf{{O}}(1).
\end{align}
We provide detailed results of (\ref{EQ:Y2Q}) in Lemma \ref{Lemma:YYBvv}.
This result, along with Lemma \ref{Lemma:PFij}, implies that if $\lambda$ and $ \varrho$ are two eigenvalues of $\mathbf{A}$, then for $1\le i \le n$,
\begin{align*}
\mathbf{T}_{i,n,\lambda,  \varrho}
    &=(\mathbf{P}_{\lambda}\, \mathbf{F}_{i,n})^T \E\big[\mathbf{Y}_{i}^T\, \mathbf{Y}_i \big]\, \mathbf{P}_ \varrho\,  \mathbf{F}_{i,n}\\
    &= \mathbf{{O}} \Big( \Big(\frac{n}{i}\Big)^{(\Re(\lambda)+\Re( \varrho))/ {b}} \Big(1+ \ln\Big(\frac{n}{i}\Big) \Big)^{\nu_\lambda+\nu_ \varrho}\Big).
\end{align*}
If $\Re(\lambda)+\Re( \varrho) \ge  {b}$, we have that 
\begin{equation}
\label{EQ:TcritBound}
\mathbf{T}_{i,n,\lambda,  \varrho}= \mathbf{{O}} \Big( \Big(\frac{n}{i}\Big)^{(\Re(\lambda)+\Re( \varrho))/ {b}} \ln^{\nu_\lambda+\nu_ \varrho}(n)\Big).
\end{equation}
If $\Re(\lambda)+\Re( \varrho) <  {b}$, choose $\alpha$ such that $\frac{1}{ {b}}(\Re(\lambda)+\Re( \varrho)) <\alpha< 1$. Then, we are guaranteed that 
\begin{equation}
\label{EQ:TBound}
\mathbf{T}_{i,n,\lambda,  \varrho}= \mathbf{{O}} \Big( \Big(\frac{n}{i}\Big)^{\alpha} \Big).
\end{equation}
Summing together over $i$, we get
\begin{equation}
\label{EQ:TCases}
\sum_{i=1}^{n}\mathbf{T}_{i,n,\lambda,  \varrho}=
\begin{cases}
 \mathbf{{O}}(n),& \Re(\lambda) + \Re( \varrho) < {b} ;\\
 \mathbf{{O}}(n \ln^{\nu_\lambda+ \nu_ \varrho+1}(n)),& \Re(\lambda) + \Re( \varrho) = {b};\\
 \mathbf{{O}}(n^{(\Re(\lambda)+\Re( \varrho))/ {b}} \ln^{\nu_\lambda+\nu_ \varrho}(n)), &\Re(\lambda) + \Re( \varrho) > {b}.
\end{cases}
\end{equation}
Summing over all combinations of $\lambda,  \varrho \in \sigma(\mathbf{A})\backslash\{\lambda_1\}$,  $\sum_{i=1}^{n}\mathbf{T}_{i,n,\lambda,  \varrho}$ is of highest order when $\lambda= \varrho=\lambda_2,$ and so $\Cov \big[\mathbf{X}_n \big]=\mathbf{o}(n^2)$, when $\lambda_2 < \lambda_1$.
\end{proof}
\begin{lemma}
\label{Lemma:L12}
As $n\to\infty$, $\frac{1}{n}\mathbf{X}_n  \inTwo  {b} \mathbf{v}_1$. Furthermore, we have the asymptotic approximation
\begin{align*}
\mathbf{X}_n -  {b}n\mathbf{v}_1 =
\begin{cases}
 \mathbf{{O}}_{{\cal L}_1}(\sqrt{n}\,),&\mbox{for\ } \Lambda < \nicefrac{1}{2};\\
 \mathbf{{O}}_{{\cal L}_1}(n^{\nicefrac{1}{2}} \ln^{\nu_2+\nicefrac{1}{2}}(n)),& \mbox{for\ } \Lambda = \nicefrac{1}{2};\\
 \mathbf{{O}}_{{\cal L}_1}(n^{\Lambda} \ln^{\nu_2}(n)), &\mbox{for\ } \Lambda > \nicefrac{1}{2}.
\end{cases}
\end{align*}
\end{lemma}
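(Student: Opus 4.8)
The plan is to derive the $\mathcal{L}_2$ statement directly from the covariance estimates already packaged in Lemma~\ref{VarCases}, and then to obtain the (weaker) $\mathcal{L}_1$ rate estimates from a combination of the bias term in Theorem~\ref{Prop:F0P} and the same covariance bounds, using the elementary inequality $\E\|Z\|_1 \le (\E\|Z\|_2^2)^{1/2}$ for a random vector $Z$ (equivalently, $\mathcal{L}_2$ control implies $\mathcal{L}_1$ control, coordinatewise). First I would record the decomposition $\mathbf{X}_n - b n \mathbf{v}_1 = (\mathbf{X}_n - \bm{\mu}_n) + (\bm{\mu}_n - b n \mathbf{v}_1)$. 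By Theorem~\ref{Prop:F0P} and $\lambda_1 = b$, the deterministic remainder satisfies $\bm{\mu}_n - b n \mathbf{v}_1 = \tau_0 \mathbf{v}_1 + \bfO(n^{\Lambda}\ln^{\nu_2}(n))$, which is $\bfO(\sqrt n)$ in the small-index case, $\bfO(n^{1/2}\ln^{\nu_2}(n))$ in the critical case (absorbed into the stated $n^{1/2}\ln^{\nu_2 + 1/2}(n)$), and $\bfO(n^{\Lambda}\ln^{\nu_2}(n))$ in the large-index case; in every regime this is dominated by the claimed bound.

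For the stochastic part $\mathbf{X}_n - \bm{\mu}_n$, I would invoke the martingale representation (\ref{EQ:DiffXMean}), $\mathbf{X}_n - \bm{\mu}_n = \sum_{i=1}^n \mathbf{Y}_i \mathbf{F}_{i,n}$, so that
\begin{align*}
\E\big[\|\mathbf{X}_n - \bm{\mu}_n\|_2^2\big] = \mathrm{tr}\,\Cov[\mathbf{X}_n],
\end{align*}
and then read off from (\ref{cov_big_oh}) in Lemma~\ref{VarCases} that $\mathrm{tr}\,\Cov[\mathbf{X}_n]$ is $O(n)$, $O(n \ln^{2\nu_2+1}(n))$, or $O(n^{2\Lambda}\ln^{2\nu_2}(n))$ in the three regimes respectively. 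Taking square roots gives $\E\|\mathbf{X}_n - \bm{\mu}_n\|_1 \le \sqrt k\,(\E\|\mathbf{X}_n - \bm{\mu}_n\|_2^2)^{1/2}$ equal to $O(\sqrt n)$, $O(n^{1/2}\ln^{\nu_2 + 1/2}(n))$, and $O(n^{\Lambda}\ln^{\nu_2}(n))$ respectively, which are exactly the three cases in the statement; combining with the deterministic estimate from the previous paragraph via the triangle inequality finishes the $\mathcal{L}_1$ claims. For the $\mathcal{L}_2$ convergence $\frac1n \mathbf{X}_n \inTwo b\mathbf{v}_1$, divide the decomposition by $n$: the deterministic part tends to $b\mathbf{v}_1$ (since $\tau_0 \mathbf{v}_1/n \to 0$ and $n^{\Lambda-1}\ln^{\nu_2}(n)\to 0$ because $\Lambda \le 1$, with the borderline $\Lambda=1$ excluded by the hypothesis $\lambda_2 < \lambda_1$), while $\E\|\frac1n(\mathbf{X}_n - \bm{\mu}_n)\|_2^2 = n^{-2}\,\mathrm{tr}\,\Cov[\mathbf{X}_n] = o(1)$ by the last sentence of Lemma~\ref{VarCases}, which asserts $\Cov[\mathbf{X}_n] = \mathbf{o}(n^2)$ whenever $\lambda_2 < \lambda_1$.

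The main obstacle is essentially bookkeeping rather than a genuine difficulty: one must be careful that the logarithmic exponents line up correctly when passing from the covariance bound (which carries $\ln^{2\nu_2+1}$) to the $\mathcal{L}_1$ bound (which should carry $\ln^{\nu_2 + 1/2}$), i.e.\ that taking the square root halves both the polynomial and the logarithmic powers; and one must confirm that the deterministic remainder $\tau_0\mathbf{v}_1 + \bfO(n^{\Lambda}\ln^{\nu_2}(n))$ is genuinely absorbed into each stated rate (in the critical regime $n^{1/2}\ln^{\nu_2}(n) = o(n^{1/2}\ln^{\nu_2+1/2}(n))$, and in the large regime it matches exactly). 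Everything else is a direct application of Lemma~\ref{VarCases}, Theorem~\ref{Prop:F0P}, and Jensen's inequality, with no new estimates required.
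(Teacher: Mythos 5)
Your proposal is correct and follows essentially the same route as the paper: both decompose $\mathbf{X}_n - bn\mathbf{v}_1$ into the centered martingale part plus the deterministic bias $\bm{\mu}_n - bn\mathbf{v}_1$, bound the former via the covariance rates of Lemma~\ref{VarCases} and the latter via Theorem~\ref{Prop:F0P}, and pass to $\mathcal{L}_1$ with Jensen's inequality. The only cosmetic difference is that the paper combines the two pieces inside a single second moment, $\E[(X_{n,i}-bnv_{1,i})^2]=\V[X_{n,i}]+(\mu_{n,i}-bnv_{1,i})^2$, before taking the square root, whereas you bound each piece separately and use the triangle inequality.
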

\begin{proof}
From Lemma \ref{VarCases}, we have 
\begin{equation}
\Big|\Big|\frac{1}{n}\, \mathbf{X}_n -\frac{1}{n} \,\bm{\mu}_n\Big|\Big|_{{\cal L}_2}^2 = \frac{1}{n^{2}}||\mathbf{X}_n -  \bm{\mu}_n||_{{\cal L}_2}^2 
= \sum_{i=1}^k \frac{1}{n^2} \V \big[X_{n,i}\big] \to 0,
\end{equation}
and $\frac{1}{n}\, \bm{\mu}_n \to  {b}\mathbf{v}_1$ by Theorem \ref{Prop:F0P}. Thus, $||\frac{1}{n}\mathbf{X}_n -  {b}\mathbf{v}_1||_{{\cal L}_2}^2 \to 0$.
Now, given Lemma \ref{VarCases} and Theorem \ref{Prop:F0P}, for each $i \in [k]$, we obtain
\begin{align*}
	\E\big[\big(X_{n,i} -   {b}n v_{1,i} \big)^2\big] 
	   &= \E\big[\big(\big(X_{n,i} - \mu_{n,i}\big) 
 		+ \big(\mu_{n,i} -  {b}n v_{1,i}\big)\big) ^2\big]\\
	&=\V\big[X_{n,i}\big] + \big(\mu_{n,i} -  {b}n v_{1,i} \big)^2\\
    &=
\begin{cases}
O(n),&\mbox{for\ } \Lambda < \nicefrac{1}{2};\\
O(n \ln^{2\nu_2+1}(n)),& \mbox{for\ }\Lambda = \nicefrac{1}{2};\\
O(n^{2\Lambda} \ln^{2\nu_2}(n)), &\mbox{for\ } \Lambda > \nicefrac{1}{2}.
\end{cases}
	\label{Eq:Lonenorm}
	\end{align*}
So, by Jensen's inequality,
	$$\E\Big[\big | X_{n,i} -  {b}n v_{1,i} \big |\Big]
	\le \sqrt {\E\big[\big(X_{n,i}-   {b} v_{1,i} n  \big)^2\big] }\, .$$
It follows that
	$(\mathbf{X}_{n} - {b}n\mathbf{v}_1)$ has the asymptotic approximation claimed. 
\end{proof}
\begin{lemma}If $\lambda_1$ is a simple eigenvalue, then as $n\to\infty$, we have
\begin{align*}
\E\big[\mathbf{Y}_n^T\, \mathbf{Y}_n \big] \to \mathbf{B}- {b}^2 \, \mathbf{v}_1^T\, \mathbf{v}_1.
\end{align*}
Hence, for any eigenvalue $\lambda \ne \lambda_1$,
\begin{align*}
\E\big[\mathbf{Y}_n^T\, \mathbf{Y}_n\big]\mathbf{P}_\lambda \to \mathbf{B}\, \mathbf{P}_\lambda.
\end{align*} 
\label{Lemma:YYBvv}
\end{lemma}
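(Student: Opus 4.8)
The plan is to compute $\E[\mathbf{Y}_n^T\mathbf{Y}_n]$ explicitly by expanding around the identity $\mathbf{Y}_n = \bigl(\tfrac{1}{s}\mathbf{Q}_n - \tfrac{1}{\tau_{n-1}}\mathbf{X}_{n-1}\bigr)\mathbf{A}$, and then taking $n\to\infty$. First I would write, using that $\mathbf{A}$ is non-random,
\begin{align*}
\E\big[\mathbf{Y}_n^T\mathbf{Y}_n\big] = \frac{1}{s^2}\,\mathbf{A}^T\,\E\Big[\big(\mathbf{Q}_n - \tfrac{s}{\tau_{n-1}}\mathbf{X}_{n-1}\big)^T\big(\mathbf{Q}_n - \tfrac{s}{\tau_{n-1}}\mathbf{X}_{n-1}\big)\Big]\,\mathbf{A},
\end{align*}
and condition on $\mathbb{F}_{n-1}$. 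Since $\mathbf{Q}_n$ is multi-hypergeometric with parameters $\tau_{n-1},\mathbf{X}_{n-1},s$, its conditional mean is $\tfrac{s}{\tau_{n-1}}\mathbf{X}_{n-1}$, so the inner conditional expectation is exactly the conditional covariance matrix $\Cov[\mathbf{Q}_n\mid\mathbb{F}_{n-1}]$. The plan is then to invoke the known multi-hypergeometric (co)variance formulas (cited earlier via \cite{Kendall}): each conditional second moment is a rational function of the entries $X_{n-1,i}$ and $\tau_{n-1}$ whose leading behaviour, after dividing by the relevant powers of $\tau_{n-1}$, is governed by the ratios $X_{n-1,i}/\tau_{n-1}$.

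The key input is Lemma~\ref{Lemma:L12}, which gives $\tfrac{1}{n}\mathbf{X}_n \inTwo b\mathbf{v}_1$, hence $X_{n-1,i}/\tau_{n-1}\to v_{1,i}$ in $\mathcal{L}_2$ (recall $\tau_{n-1}=bn+\tau_0-b\sim bn$), and therefore in probability. Substituting these limits into the multi-hypergeometric covariance expressions, the finite-population correction factor $\tfrac{\tau_{n-1}-s}{\tau_{n-1}-1}\to 1$, and one is left with the limiting "conditional covariance" $s(s-1)\mathbf{v}_1^T\mathbf{v}_1 + s\,\mathbf{diag}(\mathbf{v}_1) - s^2\mathbf{v}_1^T\mathbf{v}_1 = \bm{\mathcal{Q}} - s^2\mathbf{v}_1^T\mathbf{v}_1$, where $\bm{\mathcal{Q}}$ is exactly the matrix defined in~(\ref{EQ: BSym3}). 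Multiplying on the left and right by $\tfrac{1}{s}\mathbf{A}^T$ and $\tfrac{1}{s}\mathbf{A}$ respectively and using~(\ref{EQ: BSym3}) gives $\mathbf{B} - b^2\mathbf{v}_1^T\mathbf{v}_1$, since $\tfrac{1}{s}\mathbf{v}_1\mathbf{A} = \tfrac{\lambda_1}{s}\mathbf{v}_1 = \tfrac{b}{s}\mathbf{v}_1$ as $\mathbf{v}_1$ is the left eigenvector for $\lambda_1=b$. To upgrade convergence in probability of these bounded-ratio expressions to convergence of expectations, I would note the entries are uniformly bounded (the ratios lie in $[0,1]$ and $s$ is fixed), so dominated convergence applies; alternatively one can bound the $\mathcal{L}_1$ error directly by the $\mathcal{L}_2$ rates from Lemma~\ref{Lemma:L12}.

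For the second assertion, right-multiply the first limit by $\mathbf{P}_\lambda$ with $\lambda\ne\lambda_1$. The plan is to use that $\mathbf{v}_1\mathbf{P}_\lambda = \mathbf{0}$ for $\lambda\ne\lambda_1$ — which follows from $\mathbf{v}_1\mathbf{P}_{\lambda_1}=\mathbf{v}_1$ (as $\mathbf{P}_{\lambda_1}=\mathbf{w}_1^T\mathbf{v}_1=\mathbf{1}^T\mathbf{v}_1$ and $\mathbf{v}_1\mathbf{1}^T=\mathbf{w}_1\mathbf{v}_1^T=1$ after the normalization chosen in the text) together with $\sum_\mu\mathbf{P}_\mu=\mathbf{I}$ and $\mathbf{P}_\mu\mathbf{P}_\lambda=\mathbf{0}$ for $\mu\ne\lambda$. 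Hence $(\mathbf{v}_1^T\mathbf{v}_1)\mathbf{P}_\lambda = \mathbf{v}_1^T(\mathbf{v}_1\mathbf{P}_\lambda) = \mathbf{0}$, and the correction term drops, leaving $\E[\mathbf{Y}_n^T\mathbf{Y}_n]\mathbf{P}_\lambda \to \mathbf{B}\,\mathbf{P}_\lambda$.

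The main obstacle I anticipate is bookkeeping rather than conceptual: carefully assembling the entrywise multi-hypergeometric variance/covariance formulas into the clean matrix form $\bm{\mathcal{Q}}$, keeping track of which terms are $O(1/n)$ after the normalization, and justifying the passage from the $\mathcal{L}_2$-convergence of $\tfrac1n\mathbf{X}_n$ to convergence of the (nonlinear, but bounded) second-moment expressions. The eigenvector identities and the algebraic simplification $\tfrac1s\mathbf{v}_1\mathbf{A}=\tfrac bs\mathbf{v}_1$ are routine once the normalization $\mathbf{w}_1=\mathbf{1}$, $\mathbf{w}_1\mathbf{v}_1^T=1$ is in force.
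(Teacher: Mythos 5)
Your proposal is correct and follows essentially the same route as the paper: sandwich the conditional covariance of the multi-hypergeometric sample $\mathbf{Q}_n$ between $\frac{1}{s}\mathbf{A}^T$ and $\frac{1}{s}\mathbf{A}$, pass to the limit using $\frac{1}{n}\mathbf{X}_n \inTwo b\mathbf{v}_1$ from Lemma~\ref{Lemma:L12}, and kill the $b^2\mathbf{v}_1^T\mathbf{v}_1$ term via $\mathbf{v}_1\mathbf{P}_\lambda=\mathbf{0}$. The only cosmetic difference is that the paper expands $\E[\mathbf{Y}_n^T\mathbf{Y}_n\mid\field_{n-1}]$ into $\E[(\nabla\mathbf{X}_n)^T(\nabla\mathbf{X}_n)]-\tau_{n-1}^{-2}\mathbf{A}^T\E[\mathbf{X}_{n-1}^T\mathbf{X}_{n-1}]\mathbf{A}$ and treats the two pieces separately (using $\Cov[\mathbf{X}_n]=\mathbf{o}(n^2)$ for the second), whereas you package the same computation as a single conditional covariance and justify the limit by bounded convergence of the ratios $X_{n-1,i}/\tau_{n-1}$ — an equivalent, arguably slightly cleaner, bookkeeping.
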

\begin{proof}
Since $\mathbf{Y}_n=\nabla \mathbf{X}_n - \frac{1}{\tau_{n-1}} \, \mathbf{X}_{n-1}\, \mathbf{A}$ and $\mathbf{X}_{n}=\mathbf{X}_{n-1}+\frac{1}{s}\, \mathbf{Q}_n\, \mathbf{A}$,
\begin{align*}
\E \big[\mathbf{Y}^T_n \mathbf{Y}_n \,\big|\, \mathbb{F}_{n-1} \big] &=  
	\E \big[(\nabla \mathbf{X}_n)^T (\nabla\mathbf{X}_n) \,\big|\, \mathbb{F}_{n-1} \big]\\
	&\qquad\qquad {} - \E \big[(\nabla \mathbf{X}_n)^T (\tau_{n-1}^{-1}\, \mathbf{X}_{n-1}\, \mathbf{A}) \,\big|\, \mathbb{F}_{n-1} \big]\\
&\qquad\qquad {}-\E \big[(\tau_{n-1}^{-1}\, \mathbf{X}_{n-1}\, \mathbf{A})^T (\nabla\mathbf{X}_n) \,\big|\, \mathbb{F}_{n-1} \big]\\  
&\qquad\qquad {}+\E \big[(\tau_{n-1}^{-1}\, \mathbf{X}_{n-1}\, \mathbf{A})^T (\tau_{n-1}^{-1}\, \mathbf{X}_{n-1}\, \mathbf{A}) \,\big|\, \mathbb{F}_{n-1} \big] \\
&=  \E \big[(\nabla \mathbf{X}_n)^T (\nabla\mathbf{X}_n) \,\big|\, \mathbb{F}_{n-1} \big] \\
&\qquad\qquad {} - \E \big[(s^{-1}\mathbf{Q}_{n}\, \mathbf{A})^T \,\big|\, \mathbb{F}_{n-1} \big] (\tau_{n-1}^{-1}\, \mathbf{X}_{n-1}\, \mathbf{A}) \\
&\qquad\qquad {}-(\tau_{n-1}^{-1}\, \mathbf{X}_{n-1}\, \mathbf{A})^T \E \big[(s^{-1}\mathbf{Q}_{n}\, \mathbf{A}) \,\big|\, \mathbb{F}_{n-1} \big]\\
&\qquad\qquad {}+(\tau_{n-1}^{-2}\, \mathbf{X}_{n-1}\, \mathbf{A})^T 
 (\mathbf{X}_{n-1}\, \mathbf{A})\\
&= \E \big[(\nabla \mathbf{X}_n)^T (\nabla\mathbf{X}_n) \,\big|\, \mathbb{F}_{n-1} \big] - \tau_{n-1}^{-2}\, \mathbf{A}^T\mathbf{X}_{n-1}^T\, \mathbf{X}_{n-1}\, \mathbf{A}.
\end{align*}
Thus, $\E\big[\mathbf{Y}^T_n\, \mathbf{Y}_n \,\big|\, \mathbb{F}_{n-1} \big] =  \E \big[(\nabla \mathbf{X}_n)^T (\nabla\mathbf{X}_n)\big] - \tau_{n-1}^{-2}\, \mathbf{A}^T\E\big[\mathbf{X}_{n-1}^T
\mathbf{X}_{n-1}\, \big]\mathbf{A}.$

We tackle the asymptotics of this expression in two parts. First, note
\begin{equation}
\label{EQ:NablaQ}
	\E \big[(\nabla \mathbf{X}_n)^T (\nabla\mathbf{X}_n) \,\big|\, \mathbb{F}_{n-1} \big] 
	 = \frac 1 {s^2} \, \mathbf{A}^T  \E \big[\mathbf{Q}_{n}^T\, \mathbf{Q}_{n} \,\big|\, \mathbb{F}_{n-1} \big]\, \mathbf{A}. 
\end{equation} 
\begin{align*}
\E \big[\mathbf{Q}_{n}^T\, \mathbf{Q}_{n} \,\big|\, \mathbb{F}_{n-1} \big] =  \bm{\mathcal{Q}}_n,
\end{align*} 

with the entries
\begin{align}
 \bm{\mathcal{Q}}_n[i,j]=
\begin{cases}
\displaystyle{\frac{s(s-1)X_{n-1,i}^2}{\tau_{n-1}(\tau_{n-1}-1)}+\frac{s(\tau_{n-1}-s) X_{n-1,i}}{\tau_{n-1}(\tau_{n-1}-1)}},& i=j;\\
\\
\displaystyle{\frac{s(s-1)X_{n-1,i}X_{n-1,j}}{\tau_{n-1}(\tau_{n-1}-1)}},& i \ne j.
\end{cases}\label{Cov3QM}
\end{align}
{which follow from the known second moments of the multivariate hypergeometric distribution, see~\cite{Kendall}.}

Let $n_{\Lambda}$ be a function that represents the appropriate asymptotic order, 
given $\Lambda$ and Lemma \ref{Lemma:L12}. Rewrite $X_{n-1,i}= {b}(n-1)v_{1,i}+\bfO_{{\cal{L}}_1}(n_{\Lambda})$. Then, when $i=j$, we get 
\begin{align*}
\E \big[ \bm{\mathcal{Q}}_n[i,i]\big]=&\, \E\Big[\frac{s(s-1)( {b}(n-1)v_{1,i}+{O}_{{\cal{L}}_1}(n_{\Lambda}))^2}{\tau_{n-1}^{\underline 2}}\\ &\qquad\qquad\qquad+\frac{s(\tau_{n-1}-s) ( {b}(n-1)v_{1,i}+{O}_{{\cal{L}}_1}(n_{\Lambda}))}{\tau_{n-1}^{\underline 2}}\Big] \nonumber\\
&= s(s-1)v_{1,i}^2+s v_{1,i}\nonumber\\ 
&\qquad{} + \E\Big[s(s-1) \frac{2 {b}(n-1)v_{1,i}\,{O}_{{\cal{L}}_1}(n_{\Lambda})+{O}_{{\cal{L}}_1}(n_{\Lambda}^2)}{( {b}(n-1)+\tau_0)^2} \Big] \\
&\qquad {} + \E\Big[ \frac{{O}_{{\cal{L}}_1}(n_{\Lambda})}{ {b}(n-1)+\tau_0} \Big]\nonumber\\
&\to  \, s(s-1)v_{1,i}^2+s v_{1,i}, \qquad \mbox{as \ } n\to\infty.
\end{align*}
For $i\ne j$, we get, 
\begin{align*}
\E \big[ \bm{\mathcal{Q}}_n[i,j]\big] \to s(s-1) v_{1,i} v_{1,j}. \qquad  \mbox{as \ } n\to\infty,
\end{align*}
Define the matrix $ \bm{\mathcal{Q}}:= s(s-1)\mathbf{v}_1^T\, \mathbf{v}_1 + s\, \mathbf{diag}(\mathbf{v}_1).$ We take the expectation of (\ref{EQ:NablaQ}) to get
\begin{align*}
\E \big[(\nabla \mathbf{X}_n)^T (\nabla\mathbf{X}_n) \big] =  \frac 1 {s^2}\, \mathbf{A}^T  \E \big[ \bm{\mathcal{Q}}_{n}\big]\mathbf{A}
\to \frac 1 {s^2}\, \mathbf{A}^T   \bm{\mathcal{Q}}\, \mathbf{A} =:\mathbf{B}, 
\end{align*}
as provided by (\ref{EQ: BSym3}).
Since $\Cov \big[\mathbf{X}_n\big]=\mathbf{o}(n^2)$, we have that 
\begin{align*}
(\tau_0 +  {b}n)^{-2} \,\E\big[\mathbf{X}_{n-1}^T\mathbf{X}_{n-1}\big] 
&= (\tau_0 +  {b}n)^{-2}\, \V\big[\mathbf{X}_{n-1}\big]\nonumber\\
&\qquad \qquad {} +(\tau_0 +  {b}n)^{-2}\, \big(\E\big[\mathbf{X}_{n-1}\big]\big)^T \E\big[\mathbf{X}_{n-1}\big]\nonumber\\
&\to \, \mathbf{0}  + \mathbf{v}_1^T \mathbf{v}_1.
\end{align*}
Therefore, as $\mathbf{v}_1\mathbf{A}=\lambda_1 \mathbf{v}_1=  b\, \mathbf{v}_1$, 
\begin{align*}
\E\big[\mathbf{Y}_n^T \mathbf{Y}_n \big] \to \mathbf{B}-\mathbf{A}\mathbf{v}_1^T \mathbf{v}_1 \mathbf{A}=\mathbf{B}- {b}^2 \mathbf{v}_1^T \mathbf{v}_1.
\end{align*}
We complete the proof by noting that when $\lambda \ne \lambda_1$ we get that $\mathbf{v}_1 \mathbf{P}_\lambda=\mathbf{v}_1\, \mathbf{P}_{\lambda_1}\, \mathbf{P}_\lambda = \mathbf{0}.$
\end{proof}
With these results, we now prove Theorem \ref{Thm: CovAsymp}.
We use techniques quite similar to that found in \cite{Janson2020}, applying them to the multiple drawing scheme.
\begin{proof} (Theorem \ref{Thm: CovAsymp}, (1))\\
Let $\lambda,  \varrho \in \sigma(\mathbf{A})\backslash \{\lambda_1\}$. Then, $\Re(\lambda)$ and $\Re( \varrho)$ are at most $\Re(\lambda_2) <  {b}/2$. We convert the inner sum into an integral and get 
\begin{equation}
\frac{1}{n} \sum_{i=1}^{n} \mathbf{T}_{i,n,\lambda,  \varrho}= \int_0^1 \mathbf{T}_{\lceil xn \rceil,n,\lambda,  \varrho} \, dx.
\label{EQ:SumInt}
\end{equation}
For each fixed $x\in(0,1]$, Lemmas \ref{Lemma:FxA} and \ref{Lemma:YYBvv} imply that 
\begin{align*}
\mathbf{T}_{\lceil xn \rceil,n,\lambda,  \varrho} 
&=\mathbf{F}_{\lceil xn \rceil,n}^{T} \mathbf{P}_{\lambda}^T \, \E\big[\mathbf{Y}_{\lceil xn \rceil}^T\, \mathbf{Y}_{\lceil xn \rceil} \big]\, \mathbf{P}_ \varrho\, \mathbf{F}_{\lceil xn \rceil,n}
&\to x^{-\frac{1}{  b} \mathbf{A}^T}\mathbf{P}_{\lambda}^T\, \mathbf{B}\, \mathbf{P}_{ \varrho}\,x^{-\frac{1}{  b} \mathbf{A}}.
\end{align*}
Now, choose some $\alpha \in [0,1)$, such that $\frac{\Re(\lambda_2)}{ {b}}<\frac{\alpha}{2}$. Then, (\ref{EQ:TBound}) can be applied, providing that for some $C<\infty$, 
\begin{align*}
\mathbf{T}_{\lceil xn \rceil,n,\lambda,  \varrho} \le C\Big(\frac{n}{\lceil xn \rceil}\Big)^\alpha \le C x^{-\alpha},
\end{align*}
which is integrable over $(0,1]$. Applying Lebesgue Dominated Convergence to (\ref{EQ:SumInt}) and a change of variables to $x=e^{- {b}s}$, we get 
\begin{align*}
\frac{1}{n} \sum_{i=1}^{n} \mathbf{T}_{i,n,\lambda,  \varrho} 
   &\to \int_0^1 x^{-\frac{1}{  b} \mathbf{A}^T}\mathbf{P}_{\lambda}^T\, \mathbf{B}\, \mathbf{P}_{ \varrho}\,x^{-\frac{1}{  b} \mathbf{A}} \, dx \\
&= {b} \int_0^\infty e^{{s} \mathbf{A}^T}\mathbf{P}_{\lambda}^T\, \mathbf{B}\, \mathbf{P}_{ \varrho}\,e^{{s} \mathbf{A}} e^{- {b}s}\, ds.
\end{align*}
Thus, (\ref{EQ:VarT}) and the definition (\ref{EQ:SigAInt3}) give us the result
\begin{align*}
\frac{1}{n} \Cov \big[\mathbf{X}_n\big] =&\, \frac{1}{n} \sum_{\lambda \ne \lambda_1} \sum_{ \varrho \ne \lambda_1} \sum_{i=1}^{n}\mathbf{T}_{i,n,\lambda,  \varrho}\nonumber\\[1 em]
{}
&\to  b \int_0^\infty e^{{s} \mathbf{A}^T}\hat{\mathbf{P}}^T\mathbf{B}\, \hat{\mathbf{P}}\,e^{s \mathbf{A}} e^{- {b}s}\, ds\\
&=: \mathbf{\Sigma}_{(\mathbf{A})},
\end{align*}
as desired.
\end{proof}
\begin{proof} (Theorem \ref{Thm: CovAsymp}, (2))\\
We again use (\ref{EQ:VarT}) and consider $\sum_{i=1}^{n}\mathbf{T}_{i,n,\lambda,  \varrho}$ for eigenvalues $\lambda,  \varrho \in \sigma(\mathbf{A})\backslash \{\lambda_1\}$. By assumption, $\Re(\lambda)+\Re( \varrho) \le 2\Re(\lambda_2)= {b}$. If $\Re(\lambda)+\Re( \varrho) < {b}$, then we have $\sum_{i=1}^{n}\mathbf{T}_{i,n,\lambda,  \varrho}= \mathbf{ O}(n),$ based on (\ref{EQ:TCases}). Thus, we only need to consider cases where 
$\Re(\lambda)=\Re( \varrho) =\Re(\lambda_2)=\frac{ {b}}{2}.$ Furthermore, we have $\nu_\lambda, \nu_ \varrho \le \nu_2$. 

We again transform the sum into an integral, but first separate the term corresponding to $i=1$ from the sum and then use the change of variables $x=n^y = e^{y \ln(n)}$. From these steps, we get
\begin{align*}
\sum_{i=1}^{n}\mathbf{T}_{i,n,\lambda,  \varrho} = &\, \mathbf{T}_{1,n,\lambda,  \varrho}+\int_1^n \mathbf{T}_{\lceil x \rceil,n,\lambda,  \varrho} \, dx \nonumber\\[1 em]
{}
=&\,\mathbf{T}_{1,n,\lambda,  \varrho}+ \int_0^1 \mathbf{T}_{\lceil n^y \rceil,n,\lambda,  \varrho} \, n^y \ln(n) \, dy.
\end{align*}
From (\ref{EQ:TcritBound}), we know that $\mathbf{T}_{1,n,\lambda,  \varrho}= \mathbf{ O}\big(n \ln^{2\nu_2}(n)\big)$, and so
\begin{align*}
(n \ln^{2\nu_2+1}(n))^{-1}\sum_{i=1}^{n}\mathbf{T}_{i,n,\lambda,  \varrho} =\mathbf{o}(1)+ \int_0^1 \mathbf{T}_{\lceil n^y \rceil,n,\lambda,  \varrho} \, n^{y-1} (\ln(n))^{-2\nu_2} \, dy.
\end{align*}
We fix $y\in (0,1)$. Then, by Lemma \ref{Lemma:PFij},
\begin{align*}
\mathbf{P}_\lambda \mathbf{F}_{\lceil n^y \rceil,n}=&\, \frac{1}{\nu_2 !} \Big(\frac{n}{\lceil n^y \rceil} \Big)^{\lambda/ {b}} \Big(\frac{1}{  b} \ln \Big(\frac{n}{\lceil n^y \rceil} \Big)\Big)^{\nu_2} \big(\mathbf{P}_\lambda\,  \mathbf{N}^{\nu_2}_\lambda +\mathbf{o}(1) \big) \nonumber\\
&= \frac{1}{\nu_2 !} n^{(1-y)\lambda/ {b}}\Big(\Big(\frac{1-y}{ {b}}\Big) \ln(n)\Big)^{\nu_2} \big(\mathbf{P}_\lambda\, \mathbf{N}^{\nu_2}_\lambda +\mathbf{o}(1) \big) 
\end{align*}
We reach similar conclusions, when using the eigenvalue $ \varrho$.

Since $\Re(\lambda)+\Re( \varrho) =  b$, we define $\delta = \Im(\lambda)+\Im( \varrho)$, and so $\lambda+ \varrho= b+\delta \iu.$ Then, we have
\begin{equation}
\frac{n^{y-1}}{(\ln(n))^{2\nu_2}}\, \mathbf{T}_{\lceil n^y \rceil,n,\lambda,  \varrho} = \frac{1}{(\nu_2!)^2}n^{\frac{1}{  b}(1-y)\delta \iu}  \Big(\frac{1-y}{ {b}}\Big)^{2\nu_2} (\mathbf{N}^T_\lambda)^{\nu_2} \mathbf{P}_\lambda^T\, \mathbf{B}\, \mathbf{P}_ \varrho\,  \mathbf{N}^{\nu_2}_ \varrho + \mathbf{o}(1).
\label{EQ:Tnlog}
\end{equation}
From Lemma \ref{VarCases}, we get that for $y\in (0,1]$ and $n\ge 2$, 
\begin{align*}
\frac{n^{y-1}}{\ln^{2\nu_2}(n)}\, \mathbf{T}_{\lceil n^y \rceil,n,\lambda,  \varrho} 
     = \mathbf{O}\Big(\frac n {\lceil n^y\rceil}\Big)n^{y-1} = \mathbf{{O}}(1).
\end{align*}
Thus, the error bound $\mathbf{o}(1)$ in (\ref{EQ:Tnlog}) is also uniformly bounded, and so we apply Lebesgue Dominated Convergence to the new integral to obtain
\begin{align}
\label{EQ:CritInt1}
 \int_0^1 & \mathbf{T}_{\lceil n^y \rceil,n,\lambda,  \varrho} \, n^{y-1} (\ln(n))^{-2\nu_2} \, dy \nonumber\\
&=  \frac{1}{(\nu_2!)^2} \Big(\int_0^1  n^{\frac{1}{  b}(1-y)\delta \iu}  \Big(\frac{1-y}{ {b}}\Big)^{2\nu_2} \, dy\Big) (\mathbf{N}^T_\lambda)^{\nu_2} \mathbf{P}_\lambda^T \mathbf{B}^{(\mathbf {A})} \mathbf{P}_ \varrho \mathbf{N}^{\nu_2}_ \varrho+ \mathbf{o}(1).
\end{align}
If $\delta=0$ $(\bar{ \varrho}={\lambda})$, then the integral in (\ref{EQ:CritInt1}) simplifies to $ {b}^{-2\nu_2}(2\nu_2+1)^{-1}$. Furthermore, this situation yields $\mathbf{P}_\lambda=\mathbf{P}_{\bar{ \varrho}}=\bar{\mathbf{P}}_ \varrho$, and so $\mathbf{P}_\lambda^T = \mathbf{P}_ \varrho^*$. In similar fashion, we get $\mathbf{N}_\lambda^T=\mathbf{N}_ \varrho^*$, and thus (\ref{EQ:CritInt1}) becomes
\begin{align*}
   \frac 1 {\ln^{2\nu_2}(n)}
\int_0^1 \mathbf{T}_{\lceil n^y \rceil,n,  \bar{ \varrho}, \varrho} \, n^{y-1}  \, dy =\frac 1{ {b}^{2\nu_2}(2\nu_2+1)(\nu_2!)^2} (\mathbf{N}_ \varrho^*)^{\nu_2}\mathbf{P}_ \varrho^* 
\, \mathbf{B} \, \mathbf{P}_ \varrho\, \mathbf{N}_ \varrho^{\nu_2}+\mathbf{o}(1).
\end{align*}
If $\delta \ne 0$, then, {by} setting $u=1-y$, we have 
\begin{align*}
\int_0^1  n^{\frac{1}{  b}(1-y)\delta \iu}  \Big(\frac{1-y}{ {b}}\Big)^{2\nu_2} \, dy ={ {b}}^{-2\nu_2} \int_0^1  e^{(\frac{1}{  b}\delta \ln(n) \iu)u}  u^{2\nu_2} \, du \to 0,
\end{align*}
as $n\to\infty$, via integration by parts. Hence, when $\lambda \ne \bar{ \varrho}$, we get 
\begin{align*}  \frac 1 {\ln^{2\nu_2}(n)}
 \int_0^1 \mathbf{T}_{\lceil n^y \rceil,n,\lambda,  \varrho} \, n^{y-1} \, dy=\mathbf{o}(1).
\end{align*}
As mentioned earlier, we may ignore pairs where $\Re(\lambda)< {b}/2$ or $\Re( \varrho)< {b}/2$. We have now shown that cases of pairs such that $\Re(\lambda)=\Re( \varrho)= {b}/2$ but $\lambda \ne \bar{ \varrho}$ are asymptotically negligible as well. 
Therefore, we get 
\begin{align*}
\frac 1{n \ln^{2\nu_2+1}(n)} \, \Cov [\mathbf{X}_n] \to \frac 1
              {\lambda_1^{2\nu_2}(2\nu_2+1)(\nu_2!)^2}
\sum_{\Re(\lambda) = \lambda_1/2}(\mathbf{N}^*_\lambda)^{\nu_2} \mathbf{P}^*_\lambda\,   \mathbf{B} \, \mathbf{P}_\lambda \,  \mathbf{N}^{\nu_2}_\lambda,
\end{align*}
as desired.
\end{proof}

\subsection{A martingale multivariate central limit theorem}
\begin{theorem}
\label{Thm:AppAffMCLT}
Consider an  irreducible affine  $(k,s,b)$-urn scheme. As $n\to\infty$, we have
$$\frac 1 {\sqrt{\xi_n}} \, (\mathbf{X}_{n} - b  \mathbf{v}_1 n)\ \convD \ \normal_k(\mathbf{0},\bfsigma_\infty ),$$
where 
\begin{align*}
{\xi_n} =
\begin{cases}
n, &   \mbox{for\ }\Lambda < \nicefrac{1}{2};\\
n \ln^{2\nu_2+1}(n), & \mbox{for\ } \Lambda= \nicefrac{1}{2},
\end{cases}
\end{align*}
and
\begin{align*}
\bfsigma_\infty=
\begin{cases}
\bfsigma_{(\mathbf{A})}, &  \mbox{for\ } \Lambda <  \nicefrac{1}{2};\\
\frac{b^{-2\nu_2}}{(2\nu_2+1)(\nu_2!)^2}
\sum_{\Re(\lambda) = b/2}(\mathbf{N}^*_\lambda)^{\nu_2} \mathbf{P}^*_\lambda \,  \mathbf{B}\, \mathbf{P}_\lambda  \,\mathbf{N}^{\nu_2}_\lambda, &  \mbox{for\ }\Lambda=\nicefrac 1 2.
\end{cases}
\end{align*}
\end{theorem}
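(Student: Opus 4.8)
The plan is to realise $\xi_n^{-1/2}(\mathbf{X}_n - b\mathbf{v}_1 n)$ as the terminal sum of a triangular martingale difference array and to apply a multivariate martingale central limit theorem. Combining the representation~(\ref{EQ:DiffXMean}) with the mean expansion $\bm{\mu}_n = b n\mathbf{v}_1 + \tau_0\mathbf{v}_1 + \bfO(n^\Lambda\ln^{\nu_2}(n))$ of Theorem~\ref{Prop:F0P}, one writes $\xi_n^{-1/2}(\mathbf{X}_n - b\mathbf{v}_1 n) = \sum_{i=1}^n \mathbf{Z}_{n,i} + \xi_n^{-1/2}\bigl(\tau_0\mathbf{v}_1 + \bfO(n^\Lambda\ln^{\nu_2}(n))\bigr)$, with $\mathbf{Z}_{n,i} := \xi_n^{-1/2}\mathbf{Y}_i\mathbf{F}_{i,n}$. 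In both regimes $n^\Lambda\ln^{\nu_2}(n) = o(\sqrt{\xi_n})$ --- for $\Lambda = \nicefrac 1 2$ it is the logarithmic gap between $\sqrt{\xi_n}$ and $n^{1/2}\ln^{\nu_2}(n)$ that accounts for this --- so the deterministic remainder is negligible and it suffices to prove $\sum_{i=1}^n \mathbf{Z}_{n,i} \convD \normal_k(\mathbf{0}, \bfsigma_\infty)$. For each fixed $n$, $(\mathbf{Z}_{n,i})_{i=1}^n$ is a martingale difference array for the nested filtration $(\field_i)$, since $\mathbf{Z}_{n,i}$ is $\field_i$-measurable, $\mathbf{F}_{i,n}$ is deterministic, and $\E[\mathbf{Y}_i \mid \field_{i-1}] = \mathbf{0}$ by~(\ref{EQ:YZero}); because $\mathbf{Y}_i\mathbf{P}_{\lambda_1} = \mathbf{0}$ and $\hat{\mathbf{P}}$ commutes with $\mathbf{F}_{i,n}$, one may replace $\mathbf{F}_{i,n}$ by $\hat{\mathbf{P}}\mathbf{F}_{i,n}$ throughout.

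Next I would check the two hypotheses of the martingale array CLT. For the conditional Lindeberg condition, the balanced fixed-addition structure gives $\|\mathbf{Y}_i\| \le C$ for a constant $C$ depending only on the urn parameters (the first summand of $\mathbf{Y}_i$ is $\tfrac1s\mathbf{Q}_i\mathbf{A}$ with $\|\mathbf{Q}_i\| \le s$, and the second is controlled by $0 \le X_{i-1,j}/\tau_{i-1} \le 1$), while Lemma~\ref{Lemma:PFij} yields $\|\hat{\mathbf{P}}\mathbf{F}_{i,n}\| = \bfO\bigl((n/i)^{\Re(\lambda_2)/b}(1 + \ln(n/i))^{\nu_2}\bigr) = \bfO\bigl(n^{\Lambda}\ln^{\nu_2}(n)\bigr)$ uniformly in $i \le n$. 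Hence $\max_{i \le n}\|\mathbf{Z}_{n,i}\| = \bfO\bigl(n^{\Lambda}\ln^{\nu_2}(n)/\sqrt{\xi_n}\bigr) \to 0$ in both the small- and critical-index cases, so for $n$ large the truncation indicators $\indicator_{\{\|\mathbf{Z}_{n,i}\| > \varepsilon\}}$ vanish for every $i$ and the conditional Lindeberg condition holds trivially; the same bound shows that the weights $w_{n,i} := \xi_n^{-1}\|\hat{\mathbf{P}}\mathbf{F}_{i,n}\|^2$ satisfy $\max_i w_{n,i} \to 0$, while $\sum_{i=1}^n w_{n,i} = \bfO(1)$ by the $\mathbf{T}_{i,n,\lambda,\varrho}$ summation estimates already carried out in Lemma~\ref{VarCases} together with~(\ref{EQ:Y2Q}).

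The substantive step is the convergence of the conditional variance $V_n := \sum_{i=1}^n \E[\mathbf{Z}_{n,i}^T\mathbf{Z}_{n,i} \mid \field_{i-1}] = \xi_n^{-1}\sum_{i=1}^n (\hat{\mathbf{P}}\mathbf{F}_{i,n})^T\mathbf{R}_i(\hat{\mathbf{P}}\mathbf{F}_{i,n})$, where $\mathbf{R}_i := \E[\mathbf{Y}_i^T\mathbf{Y}_i \mid \field_{i-1}]$. As in the proof of Lemma~\ref{Lemma:YYBvv}, $\mathbf{R}_i$ is a fixed rational function, with quadratic numerator, of the bounded random vector $\mathbf{X}_{i-1}/\tau_{i-1}$, cf.~(\ref{Cov3QM}); Lemma~\ref{Lemma:L12} gives $\mathbf{X}_{i-1}/\tau_{i-1} \to \mathbf{v}_1$ in $\mathcal{L}_2$, and boundedness upgrades this to $\mathbf{R}_i = (\mathbf{B} - b^2\mathbf{v}_1^T\mathbf{v}_1) + \mathbf{e}_i$ with $\E\|\mathbf{e}_i\| \to 0$. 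Sandwiching by $\hat{\mathbf{P}}$ annihilates the $-b^2\mathbf{v}_1^T\mathbf{v}_1$ term because $\mathbf{v}_1\hat{\mathbf{P}} = \mathbf{0}$, so the principal part of $V_n$ is the deterministic matrix $\xi_n^{-1}\sum_{i=1}^n(\hat{\mathbf{P}}\mathbf{F}_{i,n})^T\mathbf{B}(\hat{\mathbf{P}}\mathbf{F}_{i,n})$, which --- by the very computations in the proof of Theorem~\ref{Thm: CovAsymp} (Lemma~\ref{Lemma:FxA} plus dominated convergence in the small case; isolation of the eigenpairs with $\Re(\lambda) = b/2$ and discarding of the oscillatory $\lambda \ne \bar\varrho$ terms in the critical case) --- converges to $\bfsigma_\infty = \lim_n \xi_n^{-1}\Cov[\mathbf{X}_n]$. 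For the residual part, $\E\bigl\|\sum_i w_{n,i}\mathbf{e}_i\bigr\| \le \sum_i w_{n,i}\,\E\|\mathbf{e}_i\|$, and since $\E\|\mathbf{e}_i\| \to 0$, $\sum_i w_{n,i} = \bfO(1)$, and $\max_i w_{n,i} \to 0$, a Toeplitz-lemma argument gives $\E\|V_n - \bfsigma_\infty\| \to 0$, hence $V_n \inprob \bfsigma_\infty$. Both hypotheses being verified, the multivariate martingale central limit theorem for triangular arrays (via the Cram\'er--Wold device and its scalar version, or directly as in the argument of~\cite{Janson2020}) delivers $\sum_{i=1}^n\mathbf{Z}_{n,i} \convD \normal_k(\mathbf{0}, \bfsigma_\infty)$. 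I expect the main obstacle to be precisely this last step: promoting the expectation-level covariance asymptotics of Theorem~\ref{Thm: CovAsymp} to a convergence-\emph{in-probability} statement for the random conditional variances $V_n$, which is what forces one to combine the $\mathcal{L}_2$ (or strong) law $\mathbf{X}_i/i \to b\mathbf{v}_1$, the boundedness of $\mathbf{X}_i/\tau_i$, and the uniform summability of the weights $w_{n,i}$, and, in the critical regime, to re-run the eigenpair bookkeeping of the proof of Theorem~\ref{Thm: CovAsymp}(2) at the level of conditional expectations.
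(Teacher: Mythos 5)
Your proposal follows essentially the same route as the paper: realize $\xi_n^{-1/2}(\mathbf{X}_n-\bm{\mu}_n)$ as the terminal sum of the martingale difference array $\xi_n^{-1/2}\mathbf{Y}_i\mathbf{F}_{i,n}$, verify the conditional Lindeberg condition via the uniform boundedness of $\mathbf{Y}_i$ and the $\mathbf{o}(\sqrt{\xi_n})$ bound on $\hat{\mathbf{P}}\mathbf{F}_{i,n}$, obtain the conditional variance limit from Lemma~\ref{Lemma:YYBvv} and the computations of Theorem~\ref{Thm: CovAsymp}, invoke the Hall--Heyde martingale CLT, and absorb the deterministic centering discrepancy $\bm{\mu}_n-bn\mathbf{v}_1$ by Slutsky. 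Your write-up is in fact somewhat more careful than the paper's at the step where the random conditional variances $V_n$ are shown to converge in $\mathcal{L}_1$ (the paper asserts this by citation, whereas you supply the boundedness-plus-Toeplitz argument), but the underlying argument is the same.
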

\begin{proof}
The proof draws from the construction of $\mathbf{Y}_n$. 
By construction and~(\ref{EQ:YZero}), we have that $\mathbf{Y}_i$ is a martingale difference sequence, and thus so is $\mathbf{Y}_i\, \mathbf{F}_{i,n}$. Furthermore, by  (\ref{EQ:DiffXMean}) we get that the sum of our martingale differences leads to $(\mathbf{X}_n - \bm{\mu}_n)$. 

To prove the conditional Lindeberg Condition, choose $\varepsilon>0$ and rewrite 
$\mathbf{Y}_i\, \mathbf{F}_{i,n}$ as 
\begin{align*}
\mathbf{Y}_i \, \mathbf{F}_{i,n} = (\vecX_i-\bm{\mu}_i) - (\vecX_{i-1}-\bm{\mu}_{i-1}).
\end{align*}

Then, we have
\begin{align}
\Big| \Big|\frac  1 {\sqrt{\xi_n}} \, \mathbf{Y}_i \mathbf{F}_{i,n} \Big| \Big|_{{\cal L}_2}
   &= \frac  1 {\sqrt{\xi_n}}\, \big| \big|(\mathbf{X}_i-\bm{\mu}_i) - (\mathbf{X}_{i-1}-\bm{\mu}_{i-1}) \big| \big|_{{\cal L}_2} \nonumber\\
&= \frac  1 {\sqrt{\xi_n}}\,\Big| \Big|\frac 1 s\mathbf{Q}_i\, \matA+{\mathbf{O}}
           \big(1+n^{\Re(\lambda_2)/b} \ln^{\nu_2}(n)\big) \Big| \Big|_{{\cal L}_2}   \nonumber\\
\to &\, 0, \label{EQ:L2Bound}
\end{align}
since {component-wise} we have $\mathbf{Q}_i \le s\mathbf{1}$. Based on (\ref{EQ:L2Bound}), there exists a natural number $n_0(\varepsilon)$, such that $\{||\mathbf{Y}_i\, \mathbf{F}_{i,n} ||_{{\cal L}_2} >\xi_n \varepsilon \}$ is empty for all $n>n_0(\varepsilon)$. Thus, we have that the summation
\begin{align*}
 \sum_{i = 1}^{n} \E \Bigg[\frac{1}{\xi_n}{\big(\mathbf{Y}_i\, \mathbf{F}_{i,n} \big)^T  (\mathbf{Y}_i\, \mathbf{F}_{i,n})} \, \indicator_
   {\big\{\big|\big|\xi_n^{-\frac{1}{2}} \,\mathbf{Y}_i \, \mathbf{F}_{i,n} \big|\big|_{{\cal L}_2} > \varepsilon\big\}}  \, \Bigg| \, \mathbb{F}_{i - 1} \Bigg] \almostsure 0,
\end{align*} 
which implies convergence in probability as well. Lindeberg's conditional condition
is verified.

By Lemma \ref{Lemma:YYBvv}, Theorem \ref{Thm: CovAsymp}, and based on the construction of $\bm{\mathcal Q}$, we have  
\begin{align*}
\sum_{i = 1}^{n} \E \Bigg[\frac 1 {\xi_n}  \big(\mathbf{Y}_i \, \mathbf{F}_{i,n} \big)^T 
 (\mathbf{Y}_i \, \mathbf{F}_{i,n}) \, \Bigg| \, \mathbb{F}_{i - 1} \Bigg] \inL{1} {\bm \Sigma}_\infty ,
\end{align*}
which implies convergence in probability. By {the} Martingale Central Limit Theorem (applied from \cite{Hall}, page 57--59), we get 
$$\frac 1 {\sqrt{\xi_n}} \sum_{i = 1}^{n}\mathbf{Y}_i \, \mathbf{F}_{i,n} = \frac 1 {\sqrt{\xi_n}} 
        (\mathbf{X}_{n}-\bm{\mu}_n)\  \convD  \ \normal_k (\mathbf{0},\bm{\Sigma}_\infty).$$

Applying Slutsky Theorem~\cite{Slutsky} to  $\xi_n^{-\frac{1}{2}}\mathbf{{O}}
\big(1+n^{\Re(\lambda_2)/b} \ln^{\nu_2}(n)\big)$, we get the result:
\begin{align*}
 \frac 1 {\sqrt{\xi_n}} \big(\mathbf{X}_{n}- b \mathbf{v}_1 n\big)\  \convD \, \normal_k (\mathbf{0},\bm{\Sigma}_\infty).
\end{align*}
\end{proof}

We conclude this section with a strong law for the composition vector.
The details for the proof are identical to that found on page 19 of \cite{Janson2020}.
\begin{theorem}
Consider an affine irreducible $(k,s,b)$-urn scheme. As $n\to\infty$, we have $$\frac{1}{n}\mathbf{X}_n \almostsure \lambda_1 \mathbf{v}_1.$$
\end{theorem}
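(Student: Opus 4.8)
The plan is to upgrade the $\mathcal{L}_2$-convergence $\frac{1}{n}\mathbf{X}_n \inTwo b\,\mathbf{v}_1$ from Lemma~\ref{Lemma:L12} to an almost-sure statement by exhibiting a martingale that converges both in $\mathcal{L}_2$ and a.s., and whose limit is forced to be $b\,\mathbf{v}_1$. The natural candidate is built from the representation~(\ref{EQ:XF}), $\mathbf{X}_n = \mathbf{X}_0\mathbf{F}_{0,n} + \sum_{i=1}^n \mathbf{Y}_i\mathbf{F}_{i,n}$. Projecting onto the principal direction via $\mathbf{P}_{\lambda_1} = \mathbf{1}^T\mathbf{v}_1$ and using $\mathbf{Y}_i\mathbf{P}_{\lambda_1} = \mathbf{0}$ (established just before~(\ref{EQ:VarFPY})) together with the exact identity~(\ref{EQ:SimpleFP1}), the principal component of $\mathbf{X}_n$ is deterministic and already equals $(bn+\tau_0)\mathbf{v}_1$; the randomness lives entirely in $\mathbf{X}_n\hat{\mathbf{P}}$. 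So the real content is to show $\frac{1}{n}\mathbf{X}_n\hat{\mathbf{P}} \almostsure \mathbf{0}$, equivalently $\frac{1}{n}\bigl(\mathbf{X}_n - \bm{\mu}_n\bigr) \almostsure \mathbf{0}$ since $\frac1n\bm{\mu}_n \to b\mathbf{v}_1$ by Theorem~\ref{Prop:F0P}.

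First I would normalize: set $\mathbf{M}_n := \sum_{i=1}^n \mathbf{Y}_i\,\mathbf{F}_{i,0}$, where $\mathbf{F}_{i,0}$ is interpreted through the polynomial $f_{i,j}$ of~(\ref{Eq:polynomial}) evaluated at the matrix argument, i.e. $\mathbf{F}_{i,0} := \mathbf{F}_{0,i}^{-1}$ (the factors $\mathbf{I}+\tau_\ell^{-1}\mathbf{A}$ are invertible for $\ell$ large since their eigenvalues are $1+\lambda/\tau_\ell \to 1$; one handles the finitely many small-$\ell$ terms separately or starts the sum at a suitable $i_0$). Then $\mathbf{M}_n$ is a martingale with respect to $(\mathbb{F}_n)$ because each $\mathbf{Y}_i$ is a martingale difference by~(\ref{EQ:YZero}) and $\mathbf{F}_{i,0}$ is non-random. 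Moreover $\mathbf{X}_n - \bm{\mu}_n = \mathbf{M}_n\,\mathbf{F}_{0,n}$. The second step is to bound $\E\bigl[\mathbf{M}_n^T\mathbf{M}_n\bigr]$: using $\E[\mathbf{Y}_i^T\mathbf{Y}_j] = \mathbf{0}$ for $i\ne j$ and $\E[\mathbf{Y}_i^T\mathbf{Y}_i] = \mathbf{O}(1)$ from~(\ref{EQ:Y2Q}), together with Lemma~\ref{Lemma:PFij} applied to the eigenprojections of $\mathbf{F}_{i,0}$ — which for $\lambda\ne\lambda_1$ gives $\mathbf{P}_\lambda\mathbf{F}_{i,0} = \mathbf{O}\bigl(i^{-\Re(\lambda)/b}\ln^{\nu_\lambda}i\bigr)$ with $\Re(\lambda)/b < 1$ in the small-index case — one gets $\E\|\mathbf{M}_n\|^2 = \sum_{i} \mathbf{O}\bigl(i^{-2\Re(\lambda_2)/b}\ln^{2\nu_2}i\bigr)$, which is summable precisely when $2\Re(\lambda_2)/b > 1$. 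For a clean statement that covers all three regimes I would instead divide by the right deterministic gauge and invoke Lemma~\ref{VarCases} directly: since $\V[X_{n,i}] = \mathbf{O}(n^{2\Lambda+\varepsilon})$ with $\Lambda < 1$, we have $\sum_n \V[X_{2^n,i}]/2^{2n} < \infty$ along the subsequence $n=2^j$, hence $\frac{1}{2^j}(X_{2^j,i}-\mu_{2^j,i}) \almostsure 0$ by Borel--Cantelli (or $\mathcal{L}_2$-summability), and monotone-type control of $\tau_n = bn+\tau_0$ between consecutive powers of $2$ fills the gaps — this is the standard "dyadic skeleton" argument.

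Concretely, the cleanest route I would actually write: show $\sum_{j\ge 1} \E\bigl[\|\tfrac{1}{2^j}(\mathbf{X}_{2^j}-\bm{\mu}_{2^j})\|^2\bigr] < \infty$ using Lemma~\ref{VarCases} (each term is $O(2^{-2j(1-\Lambda)}\operatorname{polylog})$, summable since $\Lambda<1$), deduce $\frac{1}{2^j}(\mathbf{X}_{2^j}-\bm{\mu}_{2^j}) \almostsure \mathbf{0}$, combine with Theorem~\ref{Prop:F0P} to get $\frac{1}{2^j}\mathbf{X}_{2^j} \almostsure b\mathbf{v}_1$; then for $2^j \le n < 2^{j+1}$ use that $\mathbf{X}_n$ grows by exactly $b$ in total mass per step and that $\|\mathbf{X}_n - \mathbf{X}_{2^j}\|_1 \le b\,(n-2^j) \le b\,2^j$, so $\|\frac1n\mathbf{X}_n - \frac{1}{2^j}\mathbf{X}_{2^j}\| \le \frac{b\,2^j}{2^j} \cdot C = O(1)$ — wait, this bound is too weak, so instead one controls the fluctuation $\mathbf{X}_n\hat{\mathbf{P}}$ directly: $\|\mathbf{X}_n\hat{\mathbf{P}} - \mathbf{X}_{2^j}\hat{\mathbf{P}}\mathbf{F}_{2^j,n}\| = \|\sum_{i=2^j+1}^n \mathbf{Y}_i\mathbf{F}_{i,n}\hat{\mathbf{P}}\|$, whose $\mathcal{L}_2$-norm is $O(2^{j(2\Lambda-1)/2}\operatorname{polylog})=o(2^j)$, again summable over $j$ — so the whole sequence, not just the skeleton, satisfies $\frac1n\mathbf{X}_n\hat{\mathbf{P}} \almostsure \mathbf{0}$. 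The main obstacle is the bookkeeping in the large-index regime $\Lambda > \frac12$, where $\V[X_{n,i}]$ can grow like $n^{2\Lambda}$ with $2\Lambda$ arbitrarily close to $2$: there $\tfrac{1}{2^j}$ alone does not give summable second moments unless one is careful to use the sharp exponent $2\Lambda < 2$ and the geometric spacing (so the series is $\sum_j 2^{-2j(1-\Lambda)}$, still convergent); the $\ln^{\nu_2}$ factors are harmless against the geometric decay. Everything else is a routine application of Lemma~\ref{VarCases}, Theorem~\ref{Prop:F0P}, and the $\mathbf{Y}_i\mathbf{P}_{\lambda_1}=\mathbf{0}$ identity.
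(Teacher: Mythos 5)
Your overall strategy---Chebyshev plus Borel--Cantelli along a geometric subsequence using the variance bounds of Lemma~\ref{VarCases}, then filling in the gaps between subsequence points---is exactly the right one, and it is in fact the argument the paper defers to (the proof is cited verbatim from p.~19 of \cite{Janson2020}). The subsequence step is fine: $\V[X_{n,i}]=O(n^{\max(1,2\Lambda)}\mathrm{polylog}(n))$ with $\Lambda<1$ makes $\sum_j \V[X_{n_j,i}]/n_j^2<\infty$ along any geometric $n_j$, so $\frac{1}{n_j}(\mathbf{X}_{n_j}-\bm{\mu}_{n_j})\almostsure\mathbf{0}$, and Theorem~\ref{Prop:F0P} converts this to $\frac{1}{n_j}\mathbf{X}_{n_j}\almostsure b\mathbf{v}_1$ in all three regimes.

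The genuine gap is in your gap-filling step. You correctly observe that with the dyadic choice $n_j=2^j$ the deterministic bound $\|\mathbf{X}_n-\mathbf{X}_{2^j}\|\le C(n-2^j)$ only gives an $O(1)$ error, but your replacement argument does not close the hole: you bound the $\mathcal{L}_2$-norm of $\sum_{i=2^j+1}^{n}\mathbf{Y}_i\mathbf{F}_{i,n}\hat{\mathbf{P}}$ \emph{for each fixed} $n$ in the block and then claim summability over $j$, whereas almost-sure control requires $\sup_{2^j\le n<2^{j+1}}$ of that fluctuation; a union bound over the $\sim 2^j$ indices in the block destroys the summability, so you need a maximal inequality. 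Two standard repairs, both available with what you already set up: (i) apply Doob's $\mathcal{L}_2$ maximal inequality to the genuine martingale $\mathbf{M}_n=\sum_i\mathbf{Y}_i\mathbf{F}_{0,i}^{-1}$ that you constructed in your first paragraph and then abandoned (the factors $\mathbf{F}_{i,n}$ all commute, being polynomials in $\mathbf{A}$, so $\mathbf{X}_n-\bm{\mu}_n=\mathbf{M}_n\mathbf{F}_{0,n}$ and the block supremum is controlled by the endpoint); or (ii) --- the route actually taken in \cite{Janson2020} --- do not fix $\rho=2$ but take $n_j=\lceil\rho^j\rceil$ with $\rho>1$ arbitrary: the bounded-increment bound (each draw changes every coordinate by at most $\max_{\vecs,j}|m_{\vecs,j}|$, and $\|\mathbf{X}_{n_j}\|=O(n_j)$) then gives $\limsup_n\|\frac1n\mathbf{X}_n-b\mathbf{v}_1\|\le C(\rho-1)$ almost surely, and intersecting over a sequence of rational $\rho\downarrow 1$ finishes the proof. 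You rejected route (ii) prematurely; the point is precisely that $\rho$ is a free parameter sent to $1$ at the end.
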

\section{Comments on large-index urns}
As mentioned in \cite{Janson,Janson2020,Chauvin} and others, $\mathbf{X}_n$ possesses an almost-sure expansion based on spectral decomposition of $\mathbf{A}$. Unfortunately, deriving the covariance for large-index urns becomes trickier, as it may depend on the initial condition of the urn. 

However, by using the recursion (\ref{Eq:Xnconditional}), we have a recursive relationship for the covariance that may be used regardless of core index size.
\begin{cor}
\label{cor:lincovUO}
For a linear $k$-color urn model with unordered multiple drawings, the covariance matrix 
$\mathbf{\Sigma}_n = \E\big[(\mathbf{X}_n-\bm{\mu}_{n})^T(\mathbf{X}_n-\bm{\mu}_{n})\big]$
satisfies the following recurrence relation:
\begin{align*}
\mathbf{\Sigma}_n =& \, \Bigl( \mathbf{I} +\frac{1}{\tau_{n-1}}\mathbf{A}\Bigr)^{T} \mathbf{\Sigma}_{n-1}  \Bigl ( \mathbf{I} +\frac{1}{\tau_{n-1}}\mathbf{A}\Bigr)\\
&-\Big(\frac{\tau_{n-1}-s}{s \tau_{n-1}^2(\tau_{n-1}-1)}\Big) \mathbf{A}^{T} \bigl(\mathbf{\Sigma}_{n-1} +\bm{\mu}_{n-1}^T\bm{\mu}_{n-1}-\tau_{n-1}\,\mathbf{diag}(\bm{\mu}_{n-1})\bigr)\mathbf{A}.
\end{align*}
\end{cor}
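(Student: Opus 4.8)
The plan is to derive the recurrence directly from the one-step stochastic recurrence~(\ref{Eq:Xnconditional}), namely $\mathbf{X}_n = \mathbf{X}_{n-1} + \tfrac1s\,\mathbf{Q}_n\,\mathbf{A}$, by conditioning on $\field_{n-1}$ and then taking a full expectation. First I would write $\mathbf{X}_n - \bm{\mu}_n = (\mathbf{X}_{n-1}-\bm{\mu}_{n-1})(\mathbf{I}+\tfrac1{\tau_{n-1}}\mathbf{A}) + \mathbf{Y}_n$, which follows by subtracting~(\ref{Eq:cond}) from~(\ref{Eq:Xnconditional}) and recalling the definition $\mathbf{Y}_n = \nabla\mathbf{X}_n - \tfrac1{\tau_{n-1}}\mathbf{X}_{n-1}\mathbf{A}$. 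Forming the outer product $(\mathbf{X}_n-\bm{\mu}_n)^T(\mathbf{X}_n-\bm{\mu}_n)$ and taking $\E[\,\cdot\mid\field_{n-1}]$, the cross terms vanish because $\E[\mathbf{Y}_n\mid\field_{n-1}]=\mathbf{0}$ by~(\ref{EQ:YZero}), leaving
\begin{align*}
\E\big[(\mathbf{X}_n-\bm{\mu}_n)^T(\mathbf{X}_n-\bm{\mu}_n)\mid\field_{n-1}\big]
= \Bigl(\mathbf{I}+\tfrac1{\tau_{n-1}}\mathbf{A}\Bigr)^{T}(\mathbf{X}_{n-1}-\bm{\mu}_{n-1})^T(\mathbf{X}_{n-1}-\bm{\mu}_{n-1})\Bigl(\mathbf{I}+\tfrac1{\tau_{n-1}}\mathbf{A}\Bigr)
+ \E\big[\mathbf{Y}_n^T\mathbf{Y}_n\mid\field_{n-1}\big].
\end{align*}

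The next step is to compute $\E[\mathbf{Y}_n^T\mathbf{Y}_n\mid\field_{n-1}]$ in closed form; this is essentially already done inside the proof of Lemma~\ref{Lemma:YYBvv}, where it is shown that $\E[\mathbf{Y}_n^T\mathbf{Y}_n\mid\field_{n-1}] = \E[(\nabla\mathbf{X}_n)^T(\nabla\mathbf{X}_n)\mid\field_{n-1}] - \tau_{n-1}^{-2}\mathbf{A}^T\mathbf{X}_{n-1}^T\mathbf{X}_{n-1}\mathbf{A}$, together with $\E[(\nabla\mathbf{X}_n)^T(\nabla\mathbf{X}_n)\mid\field_{n-1}] = \tfrac1{s^2}\mathbf{A}^T\bm{\mathcal Q}_n\mathbf{A}$ and the explicit entries of $\bm{\mathcal Q}_n$ in~(\ref{Cov3QM}). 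I would rewrite the entrywise formula~(\ref{Cov3QM}) in matrix form as
\begin{align*}
\bm{\mathcal Q}_n = \frac{s(s-1)}{\tau_{n-1}(\tau_{n-1}-1)}\,\mathbf{X}_{n-1}^T\mathbf{X}_{n-1} + \frac{s(\tau_{n-1}-s)}{\tau_{n-1}(\tau_{n-1}-1)}\,\mathbf{diag}(\mathbf{X}_{n-1}),
\end{align*}
so that $\tfrac1{s^2}\mathbf{A}^T\bm{\mathcal Q}_n\mathbf{A} - \tau_{n-1}^{-2}\mathbf{A}^T\mathbf{X}_{n-1}^T\mathbf{X}_{n-1}\mathbf{A}$ collapses, after combining the two $\mathbf{X}_{n-1}^T\mathbf{X}_{n-1}$ contributions over the common denominator, to
\begin{align*}
\E\big[\mathbf{Y}_n^T\mathbf{Y}_n\mid\field_{n-1}\big] = -\,\frac{\tau_{n-1}-s}{s\,\tau_{n-1}^2(\tau_{n-1}-1)}\,\mathbf{A}^T\Bigl(\mathbf{X}_{n-1}^T\mathbf{X}_{n-1} - \tau_{n-1}\,\mathbf{diag}(\mathbf{X}_{n-1})\Bigr)\mathbf{A}.
\end{align*}
The sign and the precise coefficient here are the one genuinely delicate point: one must check that $\tfrac{s-1}{s^2\tau_{n-1}(\tau_{n-1}-1)} - \tfrac1{\tau_{n-1}^2} = -\tfrac{\tau_{n-1}-s}{s\,\tau_{n-1}^2(\tau_{n-1}-1)}$, which is a short algebraic identity but the place where an error would most likely creep in.

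Finally I would take a full expectation over $\field_{n-1}$. The deterministic matrix $(\mathbf{I}+\tfrac1{\tau_{n-1}}\mathbf{A})$ pulls outside, turning the first term into $(\mathbf{I}+\tfrac1{\tau_{n-1}}\mathbf{A})^T\bm{\Sigma}_{n-1}(\mathbf{I}+\tfrac1{\tau_{n-1}}\mathbf{A})$ by the definition $\bm{\Sigma}_{n-1}=\E[(\mathbf{X}_{n-1}-\bm{\mu}_{n-1})^T(\mathbf{X}_{n-1}-\bm{\mu}_{n-1})]$; and in the second term $\E[\mathbf{X}_{n-1}^T\mathbf{X}_{n-1}] = \bm{\Sigma}_{n-1} + \bm{\mu}_{n-1}^T\bm{\mu}_{n-1}$ while $\E[\mathbf{diag}(\mathbf{X}_{n-1})] = \mathbf{diag}(\bm{\mu}_{n-1})$. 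Substituting these gives exactly the stated recurrence. No deep obstacle is expected beyond bookkeeping; the main thing to get right is the scalar coefficient consolidation described above, and making sure the $\tau_{n-1}$ versus $\tau_{n-1}-1$ factors in the hypergeometric second moments are tracked faithfully from~(\ref{Cov3QM}).
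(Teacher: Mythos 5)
Your proposal is correct and follows essentially the same route as the paper: both arguments reduce everything to the conditional second moment $\bm{\mathcal Q}_n$ of the hypergeometric sample from~(\ref{Cov3QM}) and then consolidate the scalar coefficients, the only difference being that you center $\mathbf{X}_n$ first (so the cross terms and the $\bm{\mu}^T\bm{\mu}$ cancellation are handled automatically by $\E[\mathbf{Y}_n\mid\field_{n-1}]=\mathbf{0}$), whereas the paper computes $\E[\mathbf{X}_n^T\mathbf{X}_n\mid\field_{n-1}]$ and subtracts $\bm{\mu}_n^T\bm{\mu}_n$ at the end using $\bm{\mu}_n=\bm{\mu}_{n-1}(\mathbf{I}+\tfrac{1}{\tau_{n-1}}\mathbf{A})$. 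One small slip in the step you yourself flag as delicate: the identity to verify should read $\tfrac{s-1}{s\,\tau_{n-1}(\tau_{n-1}-1)}-\tfrac{1}{\tau_{n-1}^2}=-\tfrac{\tau_{n-1}-s}{s\,\tau_{n-1}^2(\tau_{n-1}-1)}$, with a single factor of $s$ (not $s^2$) in the first denominator, since $\tfrac{1}{s^2}\cdot\tfrac{s(s-1)}{\tau_{n-1}(\tau_{n-1}-1)}=\tfrac{s-1}{s\,\tau_{n-1}(\tau_{n-1}-1)}$; as written your identity fails for $s>1$, but your displayed final formula for $\E[\mathbf{Y}_n^T\mathbf{Y}_n\mid\field_{n-1}]$ and hence the recurrence are nonetheless correct.
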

\begin{proof}
We first derive a recursion for $\E\big[\mathbf{X}_n^T \mathbf{X}_n \, \big|\, \mathbb{F}_{n-1}\big]$ and then compute $\mathbf{\Sigma}_n = \E\big[\mathbf{X}_n^T \mathbf{X}_n\big]-\bm{\mu}_{n}^T\bm{\mu}_{n}$.

To begin, consider $\E\big[\mathbf{X}_n^T \mathbf{X}_n \, \big|\, 
\mathbb{F}_{n-1}\big]$. 
Using (\ref{Eq:Xnconditional}), we have
\begin{align}
\label{Cov3Rec}
\E \big[\mathbf{X}^T_n \mathbf{X}_n \,\big|\, \mathbb{F}_{n-1} \big] =&\,  
	\E \big[(\mathbf{X}_{n-1}+\frac{1}{s}\mathbf{Q}_n \mathbf{A})^T (\mathbf{X}_{n-1}+\frac{1}{s}\mathbf{Q}_n \mathbf{A}) \,\big|\, \mathbb{F}_{n-1} \big]\nonumber\\[1 em]
{}
=&\,  \mathbf{X}_{n-1}^T \mathbf{X}_{n-1} + \frac{1}{\tau_{n-1}} \big(\mathbf{A}^T \mathbf{X}_{n-1}^T \mathbf{X}_{n-1} + \mathbf{X}_{n-1}^T \mathbf{X}_{n-1}\mathbf{A} \big)\nonumber\\
&+ \frac{1}{s^{2}} \mathbf{A}\,\E \big[\mathbf{Q}_n^T\mathbf{Q}_n \,\big|\, \mathbb{F}_{n-1} \big]\, \mathbf{A}.
\end{align}

We then substitute (\ref{Cov3QM}) into (\ref{Cov3Rec}) to attain
\begin{align*}
\E \big[\mathbf{X}^T_n \mathbf{X}_n \,\big|\, \mathbb{F}_{n-1} \big] =&\, \Bigl( \mathbf{I} +\frac{1}{\tau_{n-1}}\mathbf{A}\Bigr)^{T} \mathbf{X}_{n-1}^T \mathbf{X}_{n-1} \Bigl ( \mathbf{I} +\frac{1}{\tau_{n-1}}\mathbf{A}\Bigr) \nonumber \\ 
& -\Big(\frac{\tau_{n-1}-s}{s \tau_{n-1}^2(\tau_{n-1}-1)}\Big)  \mathbf{A}^{T} \Big( \mathbf{X}_{n-1}^T \mathbf{X}_{n-1} -\tau_{n-1}\mathbf{diag} (\mathbf{X}_{n-1} ) \Big)\mathbf{A}.
\end{align*}

From here, we take the expected value of both sides and use 
$\E\big[\mathbf{X}_j^T \mathbf{X}_j\big]=\mathbf{\Sigma}_j + \bm{\mu}_{j}^T\bm{\mu}_{j}$ as well as relationship $\bm{\mu}_j = \bm{\mu}_{j-1} \big( \mathbf{I} + \frac{1}{\tau_{j-1}} \mathbf{A}\big)$ to get 
\begin{align*}
\mathbf{\Sigma}_n =&\, \Bigl( \mathbf{I} +\frac{1}{\tau_{n-1}}\mathbf{A}\Bigr)^{T} \big(\mathbf{\Sigma}_{n-1} + \bm{\mu}_{n-1}^T\bm{\mu}_{n-1}\big) \Bigl ( \mathbf{I} +\frac{1}{\tau_{n-1}}\mathbf{A}\Bigr) \\
&-\Big(\frac{\tau_{n-1}-s}{s \tau_{n-1}^2(\tau_{n-1}-1)}\Big) \mathbf{A}^{T} \bigl(\mathbf{\Sigma}_{n-1} +\bm{\mu}_{n-1}^T\bm{\mu}_{n-1}-\tau_{n-1}\mathbf{diag}(\bm{\mu}_{n-1})\bigr)\mathbf{A} \\ 
&- \Bigl( \mathbf{I} +\frac{1}{\tau_{n-1}}\mathbf{A}\Bigr)^{T}  \bm{\mu}_{n-1}^T\bm{\mu}_{n-1} \Bigl ( \mathbf{I} +\frac{1}{\tau_{n-1}}\mathbf{A}\Bigr)\\[1 em]
{}
=&\, \Bigl( \mathbf{I} +\frac{1}{\tau_{n-1}}\mathbf{A}\Bigr)^{T} \mathbf{\Sigma}_{n-1}  \Bigl ( \mathbf{I} +\frac{1}{\tau_{n-1}}\mathbf{A}\Bigr)\\
&-\Big(\frac{\tau_{n-1}-s}{s \tau_{n-1}^2(\tau_{n-1}-1)}\Big) \mathbf{A}^{T} \bigl(\mathbf{\Sigma}_{n-1} +\bm{\mu}_{n-1}^T\bm{\mu}_{n-1}-\tau_{n-1}\mathbf{diag}(\bm{\mu}_{n-1})\bigr)\mathbf{A}.
\end{align*}
\end{proof}

\section{Growth under sampling with replacement}
\label{Sec:rep}
If the sample is drawn with replacement, we get results that are very similar to the case 
of sampling without replacement, with very similar proof techniques. So,
we only point out the salient points of difference and very succinctly describe the main result.

To create a notational distinction between the without-replacement and the with-replacement sampling schemes, we use tilded variables to refer to the counterparts in the without-replacement schemes. 
Letting $\tilde{\mathbf{Q}}_n$ be the sample drawn with replacement at step $n$, 
we get multinomial distribution with conditional probability 
\begin{align*} 
\mathbb{P}\big(\tilde{\mathbf{Q}}_{n}=(s_1, \ldots, s_k)\,\big|\, \tilde \field_{n-1} \bigr)=&\, 
\begin{pmatrix} m \cr s_1, s_2, \dots, s_{k} \end{pmatrix} \frac{\tilde X_{n-1,1}^{s_1} \cdots 
             \tilde X_{n-1,k}^{s_k}}{\tau_{n-1}^{m}},
\end{align*}

The composition vector in this case has a mean value identical to that
in the case of sampling without replacement. The covariance matrix develops slightly differently from that of sampling without replacement, but remains of the same order as that described in (\ref{cov_big_oh}) and can be solved via Corollary \ref{cor:lincovUO}, but instead with $\frac{1}{s \tau_{n-1}^2}$ substituting 
for $\frac{\tau_{n-1}-s}{s \tau_{n-1}^2(\tau_{n-1}-1)}$.
Furthermore, for small- and critical-index urns, a central limit theorem follows identically to that of Theorem~\ref{Thm:AppAffMCLT}. 
\section{Examples}
We give four examples on $(k,s,b)$-urns. The first two  examples are
on small urns, one with a diagonalizable core and one with a non-diagonalizable core. 
We work out all the details in Example~\ref{Ex:Sm}, and portray a more sketchy picture in the rest.  
Example~\ref{Ex:Ht4} provides a useful application. Example~\ref{Ex:Crit} focuses on a critical urn, and 
we finish with a  
brief mention of the behavior of a large-index urn in Example~\ref{Ex:Large}. 
\begin{example}
\label{Ex:Sm} (Small diagonalizable core)
\end{example} 

Consider an affine urn scheme with $s=2$ draws per sample and irreducible core 
$$\mathbf{A}=\small\begin{pmatrix}
                                  6 & 4& 6 \\
                               	2 & 6&8 \\
		                        	4& 6&6
\end{pmatrix}.$$
This is a $(3,2,16)$-urn. Theorem~\ref{Thm:affine} completes the replacement matrix {to}
\begin{align*}
\matM = \begin{matrix}
     200 \\
     110 \\
     101\\
     020 \\
     011 \\
     002
\end{matrix}
\begin{pmatrix}
    6 & 4&6\\
    4 & 5&7\\
    5 &5&6 \\
   2 & 6&8\\
   3 & 6&7 \\
   4& 6&6 
\end{pmatrix}
\end{align*}

Suppose the urn starts in $\vecX_0= (4,3,5)$.
With $n=2000$, the computation in Corollary~\ref{Cor:exactmean} gives
$$\frac{1}{2000}\vecX_{2000} \approx (3.787, 5.527, 6.692).$$

The eigenvalues of $\mathbf{A}$ are $\lambda_1=16, \lambda_2 =1+\sqrt{5}$ and $\lambda_3 =1-\sqrt{5}$. With $\lambda_2 =1+\sqrt{5} < 8 = \lambda_1/ 2$, this is a small index case.
The principal left eigenvector is $\mathbf{v}_1 =(\frac{13}{55}, \frac{19}{55}, \frac{23}{55})$.
As $n \to \infty$, we have
\begin{align*}
\frac{1}{n}\E\big[\mathbf{X}_n\big] \to \bm{\mu}_\infty = 16\, \mathbf{v}_1
            = \Big(\frac{208}{55}, \frac{304}{55}, \frac{362}{55}\Big)
       \approx (3.782, 5.527, 6.691).
\end{align*}

We apply Theorem \ref{Thm: CovAsymp} to attain $\bm{\Sigma}_\infty$. We note that 
$\mathbf{A}$ is diagonalizable, and so we may write 
$\mathbf{A}= \mathbf{T}\, {\rm \bf diag} (\lambda_1, \lambda_2, \lambda_3)\, \mathbf{T}^{-1}$, where

$$\mathbf{T}=\small \frac{1}{2}\left(
\begin{array}{ccc}
 2 &-19 \sqrt{5}-43 & 19 \sqrt{5}-43 \\
 2 &13 \sqrt{5}+27 & 27 - 13\sqrt{5}\\
 2 & 2 & 2 \\
\end{array}
\right).
$$

Set $\mathbf{P}_{\lambda_1}=\mathbf{1}^T\, \mathbf{v}_1$, and 
$\mathbf{B}$ as in (\ref{EQ: BSym3}). Let 
$$e^{s \mathbf{A}}=\mathbf{T}\,\mathbf{diag}  (e^{16s}, \ \ 
  e^{(1+\sqrt{5}\,) s}, \ \ e^{(1+\sqrt{5}\, ) s})\,\mathbf{T}^{-1}.$$
Then,  we have
\begin{align*}
\bm{\Sigma}_{(\mathbf{A})} =16 \int_0^\infty e^{{s} \mathbf{A}^T}\, \hat{\mathbf{P}}^T\,
        \mathbf{B}\, \hat{\mathbf{P}}\, e^{{s} \mathbf{A}}\, e^{-16 s}\, ds.
\end{align*}
We apply Theorem~\ref{Thm:AppAffMCLT}, and conclude that
$$
\frac{1}{\sqrt{n}}\Big(\mathbf{X}_{n}-\Big(\frac{208}{55}, \frac{304}{55}, \frac{362}{55}\Big)\Big) \ 
        \convD \ \normal_3 \left(\mathbf{0},\ \frac{1}{3025}\small\begin{pmatrix}
 5552 & -2864 & -2688 \\
 -2864 & 1808 & 1056 \\
 -2688 & 1056 & 1632 \\
\end{pmatrix}\right).
$$
\begin{example}
\label{Ex:Ht4} (Small nondiagonalizable core)
\end{example}
A new series of weekly art events is about to begin in the city. You and three of your friends decide to go together to the kickstarter ceremony, but for the first official event, one friend cannot make it. To retain 
the group {size at} four, a new person is invited to join the group for the next event. For each subsequent {event}, this process repeats, where three of the prior attenders group together with a new person to attend. Former attenders may return to each new event, and the group can be formed via individuals who are linked by an outside acquaintance (so, the network does not necessarily need to include someone who knows all attending members).

To estimate the total number of individuals from the collective that have attended altogether one, two, three, and at least four of these events over a long-run, we may apply an affine urn model. Note that we begin with four individuals who have attended one event (the kickstarter), and with each new art event, we sample three former attendees to return with a new member. We always add one person to the group who has attended only one event (the new member), and if a former member is selected to attend the new event, they leave the list of those who attended $i$ events to the list of those who have attended $(i+1)$ events, unless they have attended at least four events, to which they remain in this category. Thus, the  affine model with $s=3$ drawings, has an initial state of $\mathbf{X}_0=(4,\,0,\, 0,\, 0)$ and the core matrix 
$$\mathbf{A}=\begin{pmatrix}-2 & 3& 0&0 \\
                               	1 & -3&3&0 \\
			1& 0&-3&3\\
					1& 0&0&0
\end{pmatrix}.$$
Here, we have an irreducible affine $(4,3,1)$-urn.
The eigenvalues of $\mathbf{A}$ are $\lambda_1=1$ and $\lambda_\ell=-3$ for $\ell\ge2$. 
With $\lambda_2 < \lambda_1 / 2$, this is a small urn. 
The leading left eigenvector is $\mathbf{v}_1=(\frac{1}{4}, \,\frac{3}{16},\, \frac{9}{64}, \,\frac{27}{64})$. Since $\mathbf{A}$ is 1-balanced, we have 
$\bm{\mu}_{\infty}=(\frac{1}{4}, \,\frac{3}{16},\, \frac{9}{64}, \,\frac{27}{64})$. 

Note that $\mathbf{A}$ is not diagonalizable, so the analysis requires the Jordan decomposition $\mathbf{A}= \mathbf{T}\, \bm{J}\, \mathbf{T}^{-1}$, where
$$\mathbf{T}={ 
\begin{pmatrix}
 1 & -3 & 1 & 0 \\
 1 & 1 & -\frac{4}{3} & \frac{1}{3} \\
 1 & 1 & 0 & -\frac{4}{9} \\
 1 & 1 & 0 & 0 \\
\end{pmatrix}}, ~~~~~~~\bm{{J}}={ \begin{pmatrix}
 1 & 0 & 0 & 0 \\
 0 & -3 & 1 & 0 \\
 0 & 0 &-3  &1 \\
 0 & 0 & 0 &-3
\end{pmatrix}}.$$ 

Again, we set $\mathbf{P}_{\lambda_1}=\mathbf{1}_k^T \mathbf{v}_1$, and $\mathbf{B}$ as in (\ref{EQ: BSym3}), and let 
$$e^{s \mathbf{A}}=\mathbf{T}\,{\small \begin{pmatrix}
 e^s & 0 & 0 & 0 \\
 0 & e^{-3 s} & e^{-3 s} s & \frac{1}{2} e^{-3 s} s^2 \\
 0 & 0 & e^{-3 s} & e^{-3 s} s \\
 0 & 0 & 0 & e^{-3 s} 
\end{pmatrix}}
\,\mathbf{T}^{-1}.$$

We apply Theorem \ref{Thm: CovAsymp}, to get the limiting matrix 
$\bm{\Sigma}_\infty$.  We apply Theorem~\ref{Thm:AppAffMCLT} to conclude that
\begin{align*}
&\frac{1}{\sqrt{n}}\Big(\mathbf{X}_{n}- \Big(\frac{1}{4}, \,\frac{3}{16},\, \frac{9}{64}, \,\frac{27}{64}\Big) n \Big)  \\
&\qquad\qquad \convD \, \normal_4 \left(\mathbf{0},
\begin{pmatrix}
 \frac{9}{112} & -\frac{207}{3136} & -\frac{2043}{87808} & \frac{783}{87808} \medskip\\
 -\frac{207}{3136} & \frac{11349}{87808} & -\frac{88983}{2458624} & -\frac{66501}{2458624}\medskip \\
 -\frac{2043}{87808} & -\frac{88983}{2458624} & \frac{7480413}{68841472} & -\frac{3387177}{68841472}\medskip\\
 \frac{783}{87808} & -\frac{66501}{2458624} & -\frac{3387177}{68841472} & \frac{4635333}{68841472} 
\end{pmatrix}\right).
\end{align*}
\begin{example} 
\label{Ex:Crit} (Critical core)
\end{example}
Consider an affine urn model with $s=2$ and replacement matrix with the core 
$$\mathbf{A}=\small\begin{pmatrix}4 & 0&2 \\
                               	2 & 4&0 \\
			0& 2&4
\end{pmatrix}.$$

The eigenvalues of $\mathbf{A}$ are $\lambda_1=6,  \lambda_2 = 3+\iu\sqrt{3}\, $, 
and $ \lambda_3 = 3-\iu\sqrt{3}\, $. Here the core index is $\nicefrac 1 2$, thus this urn is critical.
The principal left eigenvector is $\mathbf{v}_1 =\frac{1}{3}\, \mathbf{1}$. For large $n$, 
we have $\frac{1}{n}\E[\mathbf{X}_n] \to 6\, \mathbf{v}_1 = (2, 2, 2)$.  

Note that $\mathbf{A}$ is diagonalizable, and so $\nu_2=0$ and we have that  
{\begin{align*}
  \mathbf{P}_{\lambda_2} = &\, \frac{1}{6}\left(
\begin{array}{ccc}
 2 & -1-\iu \sqrt{3} & -1+\iu \sqrt{3} \\
 -1+\iu \sqrt{3} & 2 & -1-\iu \sqrt{3} \\
 -1-\iu \sqrt{3} & -1+\iu \sqrt{3} & 2 \\
\end{array}
\right),\\
\\
 \mathbf{P}_{\lambda_3} = &\, \frac{1}{6} \left(
\begin{array}{ccc}
 2 & -1+\iu \sqrt{3} & -1-\iu \sqrt{3} \\
 -1-\iu \sqrt{3} & 2 & -1+\iu \sqrt{3} \\
 -1+\iu \sqrt{3} & -1-\iu \sqrt{3} & 2 \\
\end{array}
\right)
\end{align*}}

By Theorem \ref{Thm: CovAsymp}, we get
\begin{align*}
\frac{1}{n \ln(n)} \bm{\Sigma}_n\to \bm{\Sigma}_{\infty}= \mathbf{P}^*_{\lambda_2}  \mathbf{B}\mathbf{P}_{\lambda_2}+ \mathbf{P}^*_{\lambda_3}  \mathbf{B}\mathbf{P}_{\lambda_3}.
\end{align*}
Upon applying Theorem~\ref{Thm:AppAffMCLT}, we conclude that
$$\frac{1}{n \ln(n)} (\mathbf{X}_{n}-(2,2,2)\, n)\  \convD \ \normal_3 \left(\mathbf{0}, \frac{1}{3}\small\begin{pmatrix}
 4 & -2 & -2 \\
 -2 & 4 & -2 \\
 -2 & -2 & 4 
\end{pmatrix}\right).
$$

\begin{example} 
\label{Ex:Large} (Large core)
\end{example}
Consider an affine urn scheme with $s=3$ draws per sample and irreducible core 
$$\mathbf{A}=\small
\begin{pmatrix}
             9 & 3&0\cr
			0 & 9&3 \cr
			3 & 0&9 
\end{pmatrix}$$
This is a $(3,3,12)$-urn. Suppose the urn starts in $\vecX_0= (3,2,2)$.
With $n=2000$, the computation in Corollary~\ref{Cor:exactmean} gives
$$\frac{1}{2000}\vecX_{2000} \approx {(3.992, 4.062, 3.947)}.$$

The eigenvalues of $\mathbf{A}$ here are $\lambda_1=12, \lambda_2 =7.5+3\iu\,\sqrt{\nicefrac{3}{4}}$ and $\lambda_3 =7.5-3\iu\,\sqrt{\nicefrac{3}{4}}$. With $ {\Re\, }\lambda_2 > \lambda_1/ 2$, this is a large index case.
The principal left eigenvector is $\mathbf{v}_1 =\frac{1}{3} \mathbf{1}$.
As $n \to \infty$, we have
\begin{align*}
\frac{1}{n}\E\big[\mathbf{X}_n\big] \to \bm{\mu}_\infty = 12\, \mathbf{v}_1
            = (4, 4, 4).
\end{align*}

\end{document}